\documentclass{amsart}

\usepackage{amsmath,amsfonts,amsthm,amssymb,stmaryrd,paralist,tikz,amsthm}
\usepackage[mathscr]{euscript}
\usetikzlibrary{matrix,arrows,decorations.pathmorphing}

\newcommand{\R}{\mathbb{R}}

\newcommand{\C}{\mathbb{C}}

\newcommand{\CP}{\mathbb{C}\mathrm{P}}
\renewcommand{\H}{\mathbb{H}}

\newtheorem{thm}{Theorem}[section]
\newtheorem*{thm*}{Theorem}
\newtheorem{lem}[thm]{Lemma}
\newtheorem*{lem*}{Lemma}
\newtheorem{cor}[thm]{Corollary}
\newtheorem*{cor*}{Corollary}
\newtheorem{prop}[thm]{Proposition}
\newtheorem*{prop*}{Proposition}
\newtheorem{defn}{Definition}
\newtheorem*{defn*}{Definition}

\newtheorem*{question*}{Question}

\usepackage{color}
\definecolor{verydarkblue}{rgb}{0,0,0.4}
\usepackage{hyperref}
\hypersetup{
pdfauthor={Keaton Quinn},
pdftitle={Asymptotically Poincar\'e surfaces in quasi-Fuchsian manifolds},
colorlinks=true,linkcolor=verydarkblue,
citecolor=verydarkblue,urlcolor=verydarkblue
}

\begin{document}

\title{Asymptotically Poincar\'e surfaces in quasi-Fuchsian manifolds}

\author{Keaton Quinn}
\address{Department of Mathematics, Statistics and Computer Science, University of Illinois at Chicago, Chicago, IL}
\email{\href{mailto:kquinn23@uic.edu}{kquinn23@uic.edu}}

%\subjclass[2010]{Primary: 30F60 (Teichm\"uller Theory), Secondary: 53C42 (Immersion (minimal, prescribed curvature, tight, etc.)).}

\date{July 31, 2019 (v1: November 21, 2018)}

\begin{abstract}
We introduce the notion of an asymptotically Poincar\'e family of surfaces in an end of a quasi-Fuchsian manifold.
We show that any such family gives a foliation of an end by asymptotically parallel convex surfaces, and that the asymptotic behavior of the first and second fundamental forms determines the projective structure at infinity.
As an application, we establish a conjecture of Labourie from \cite{labourie1992} regarding constant Gaussian curvature surfaces. We also derive consequences for constant mean curvature surfaces.
\end{abstract}

\maketitle

\section{Introduction}

Epstein in \cite{epstein1984} describes a map that takes a domain $\Omega$ in $\CP^1$ and a conformal metric $\sigma$ on $\Omega$ and gives an \textit{Epstein surface} $\mathrm{Ep}_\sigma : \Omega \to \H^3$ in hyperbolic space.
A variant of this construction associates to any conformal metric $\sigma$ on the Riemann surface at infinity $X$ of an end of a quasi-Fuchsian manifold $M$, a map $X \to M$, which we also call an Epstein surface. 
We use this construction to study surfaces in quasi-Fuchsian manifolds. 

The hyperbolic metric $h$ of $X$ gives a distinguished Epstein surface that we call the \textit{Poincar\'e surface of $X$}. 
Scalar multiples of $h$ give a family of Epstein surfaces, which we call \text{the Poincar\'e family}, that foliate the end. 
Similarly, a family of conformal metrics that is $C^\infty$-asymptotic to the family of scalar multiples of $h$ gives rise to a family of Epstein surfaces that we call an \textit{asymptotically Poincar\'e family}.
These definitions are made precise in Section \ref{asym-def-section} (see Definition \ref{asym-def}).

Our main result concerns the first and second fundamental forms, $I_\epsilon$ and $I\!I_\epsilon$, of an asymptotically Poincar\'e family $S_\epsilon$, for $\epsilon \in (0,1)$. These both give families of Riemannian metrics on $X$, and hence, points in Teichm\"uller space $[I_\epsilon]$ and $[I\!I_\epsilon]$. We prove the following regarding their asymptotic behavior.

\begin{thm}
\label{big-thm-intro}
In a quasi-Fuchsian manifold, let $S_\epsilon$, for $\epsilon \in (0,1)$, be an asymptotically Poincar\'e family of surfaces with surface at infinity $X$.
If $I_\epsilon$ and $I\!I_\epsilon$ are the corresponding first and second fundamental forms then 
\[
[I_\epsilon] \to [h] 
\quad \text{ and } \quad 
[I\!I_\epsilon] \to [h]
\quad \text{ as } \epsilon \to 0
\] 
and the tangent vector to $[I\!I_\epsilon]$ at $[h]$ vanishes while the tangent vector to $[I_\epsilon]$ is determined by the holomorphic quadratic differential $\phi$ that measures the difference between the induced projective structure and the Fuchsian projective structure on $X$.
That is,
\[
\dot{[I_\epsilon]} = c \, \mathrm{Re}(\phi) \quad \text{and } \quad \dot{[I\!I_\epsilon]} = 0
\]
for a positive constant $c$.
\end{thm}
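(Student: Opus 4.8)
The plan is to reduce to the model case in which $S_\epsilon$ is the Poincar\'e family itself, and then to extract the behaviour in $\mathcal{T}(X)$ from the classical equidistant‑surface formulas together with the description of the second fundamental form at infinity.

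\emph{Reduction.} Write $\sigma_\epsilon$ for the conformal metrics defining $S_\epsilon$ and $\lambda(\epsilon)^2h$ for the scalar multiples of $h$ to which they are asymptotic, so that the latter define the Poincar\'e surfaces $\mathrm{Ep}_{\lambda(\epsilon)^2h}$. By Definition~\ref{asym-def} the ratio $\sigma_\epsilon/\bigl(\lambda(\epsilon)^2h\bigr)$ tends to $1$ in $C^\infty$, and the first and second fundamental forms of an Epstein surface depend smoothly on the $2$‑jet of the defining metric; combined with the earlier results on asymptotically parallel surfaces, this shows that replacing $\sigma_\epsilon$ by $\lambda(\epsilon)^2h$ perturbs $I_\epsilon$ and $I\!I_\epsilon$ by an amount too small to change either the limits in $\mathcal{T}(X)$ or the one‑sided derivatives at $\epsilon=0$. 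So I would assume from now on that $S_\epsilon=\mathrm{Ep}_{e^{2t}h}$ with $t=t(\epsilon)\to\infty$ as $\epsilon\to0$.

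\emph{Equidistant formulas and the cancellation.} Scaling the defining metric corresponds to flowing along the normal, so $\mathrm{Ep}_{e^{2t}h}$ is the surface at signed distance $t$ from the Poincar\'e surface $\mathrm{Ep}_h$, which I may take convex by starting the flow far out. With $B_0$ and $I_0$ the shape operator and induced metric of this base surface, the standard equidistant formulas in $\H^3$ give
\[ I_t=I_0\bigl((\cosh t\,\mathrm{Id}+\sinh t\,B_0)\cdot,(\cosh t\,\mathrm{Id}+\sinh t\,B_0)\cdot\bigr),\qquad I\!I_t=I_0\bigl((\cosh t\,\mathrm{Id}+\sinh t\,B_0)\cdot,(\sinh t\,\mathrm{Id}+\cosh t\,B_0)\cdot\bigr). \]
The defining property of the Epstein map identifies the metric at infinity of this foliation with $h$, i.e.\ $I_0\bigl((\mathrm{Id}+B_0)\cdot,(\mathrm{Id}+B_0)\cdot\bigr)$ is a fixed positive multiple of $h$. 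Writing $\cosh t=\tfrac12(e^t+e^{-t})$, $\sinh t=\tfrac12(e^t-e^{-t})$, and introducing the shape operator at infinity $N:=(\mathrm{Id}+B_0)^{-1}(\mathrm{Id}-B_0)$ — which is $h$‑self‑adjoint — one obtains, up to a fixed positive constant $a$,
\[ I_t=a\bigl(e^{2t}h+2\,h(N\cdot,\cdot)+e^{-2t}h(N\cdot,N\cdot)\bigr),\qquad I\!I_t=a\bigl(e^{2t}h-e^{-2t}h(N\cdot,N\cdot)\bigr). \]
The point is that, because $N$ is $h$‑self‑adjoint, the $O(1)$ cross term $2\,h(N\cdot,\cdot)$ appearing in $I_t$ cancels identically in $I\!I_t$; this is the structural reason the two tangent vectors differ.

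\emph{Identifying $N$, and the Teichm\"uller limits.} The pair $\bigl(h,h(N\cdot,\cdot)\bigr)$ satisfies the Gauss--Codazzi equations at infinity — equivalently $h(N\cdot,\cdot)$ is a Codazzi tensor for $h$ whose $h$‑trace is the constant $-K_h$ (up to the normalization above) — so its trace‑free part is the real part of a holomorphic quadratic differential for the complex structure of $X$, and by the correspondence between second fundamental forms at infinity and projective structures this differential is exactly the Schwarzian $\phi$. Thus $N=\mathrm{Id}+M$ with $M$ trace‑free and $h(M\cdot,\cdot)=c_0\,\mathrm{Re}(\phi)$ for a constant $c_0$; in the Fuchsian case $\phi=0$, $N=\mathrm{Id}$, and $\mathrm{Ep}_h$ is totally geodesic, a useful check. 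Substituting,
\[ I_t=a(e^{2t}+2+e^{-2t})h+2a(1+e^{-2t})c_0\,\mathrm{Re}(\phi)+a\,e^{-2t}h(M\cdot,M\cdot), \]
\[ I\!I_t=a(e^{2t}-e^{-2t})h-2a\,e^{-2t}c_0\,\mathrm{Re}(\phi)-a\,e^{-2t}h(M\cdot,M\cdot). \]
Dividing each by its leading conformal factor, $I_t$ is conformal to $h+O(e^{-2t})$ and $I\!I_t$ to $h+O(e^{-4t})$, giving $[I_\epsilon]\to[h]$ and $[I\!I_\epsilon]\to[h]$; moreover the $h$‑trace‑free part of $I_t$ divided by that factor is $2c_0\,e^{-2t}\mathrm{Re}(\phi)+O(e^{-4t})$, while for $I\!I_t$ it is $O(e^{-4t})$. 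Since the parametrization of Definition~\ref{asym-def} makes $e^{-2t(\epsilon)}$ comparable to $\epsilon$, and since $\mathrm{Re}(\phi)$ represents a tangent vector at $[h]$ directly under the identification of $T_{[h]}\mathcal{T}(X)$ with real parts of holomorphic quadratic differentials, the curve $\epsilon\mapsto[I_\epsilon]$ leaves $[h]$ with velocity a nonzero multiple of $\mathrm{Re}(\phi)$ (positive after tracking the signs in the formula for $h(N\cdot,\cdot)$), while $\epsilon\mapsto[I\!I_\epsilon]$ has vanishing velocity; this gives $\dot{[I_\epsilon]}=c\,\mathrm{Re}(\phi)$ and $\dot{[I\!I_\epsilon]}=0$.

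\emph{Main obstacle.} Once the equidistant formulas are available the cancellation in $I\!I_t$ is immediate, so the real work lies in two places: making the reduction quantitative — controlling precisely how the $C^\infty$‑asymptotic perturbation of $\sigma_\epsilon$ propagates through the Epstein construction and checking that its effect on $[I_\epsilon]$ is $o(\epsilon)$ — and pinning down, with the correct sign, the identification of the quadratic differential in the second fundamental form at infinity with $\phi$. The former is the genuine analytic obstacle; the latter should follow from the projective‑structure‑at‑infinity machinery established earlier.
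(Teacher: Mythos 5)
Your strategy---reduce to the parallel family $\mathrm{Ep}_{e^{2t}h}$ and then read off the asymptotics from the equidistant-surface formulas and the data at infinity $(h,\phi)$---is genuinely different from the paper's proof, which expands the explicit Epstein formulas for $I(\sigma)$ and $I\!I(\sigma)$ directly in $\epsilon$ and then projects to $\mathcal{T}(X)$ using the transverse-traceless decomposition (Proposition \ref{thm-in-sobolev} and Theorem \ref{main-result}). However, there is a genuine gap exactly at the step you defer as the ``main obstacle'': the reduction. Replacing $\sigma(\epsilon)$ by the scalar multiples $\lambda(\epsilon)^2h$ does \emph{not} perturb the normalized fundamental forms by $o(\epsilon)$. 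With $\gamma(\epsilon)=f(\epsilon)\sigma(\epsilon)$ as in Definition \ref{asym-def}, the discrepancy $\gamma(\epsilon)-h$ is of order $\epsilon$ and enters at first order: the paper's expansion gives $I_\epsilon = h + \epsilon\,(\dot{\gamma} + 2f'(0)h + 4f'(0)\mathrm{Re}(\phi)) + O(\epsilon^2)$ and $I\!I_\epsilon = h + \epsilon\,\dot{\gamma} + O(\epsilon^2)$, so the model family differs from the actual one already in the first derivative, by $\dot{\gamma}$. The reduction is legitimate only after projecting to Teichm\"uller space, and only because $\dot{\gamma}$ is pure trace ($\gamma(\epsilon)$ is a path of metrics conformal to $h$, so $\dot{\gamma}=\dot{p}\,h$) and hence killed by $d\pi_h$; proving this, and proving that $\epsilon\mapsto[I_\epsilon]$, $[I\!I_\epsilon]$ are differentiable at $\epsilon=0$ at all, requires precisely the first-order expansion of the Epstein forms in the deviation $\gamma-h$ that constitutes the paper's proof. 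Neither ``smooth dependence on the $2$-jet'' nor Lemma \ref{asym-parallel-lemma} supplies this: the latter only gives $C^0$ closeness of the surfaces and says nothing about derivatives of $I_\epsilon$ and $I\!I_\epsilon$ in $\epsilon$. So the heart of the argument is asserted rather than carried out.

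A second, smaller gap: the identification of the trace-free part of $h(N\cdot,\cdot)$ with a positive multiple of $\mathrm{Re}(\phi)$ is correct (it is the standard description of the second fundamental form at infinity of the Epstein foliation), but it is not ``machinery established earlier'' in this paper; the Codazzi/trace argument only shows the trace-free part is the real part of \emph{some} holomorphic quadratic differential, and identifying it with $\phi$, with the right sign, needs an argument. The paper's substitute is the identity $B(h)=\tfrac12\phi$, which comes from the definition of $\phi$ via the Schwarzian of the Riemann map together with the Osgood--Stowe naturality and cocycle properties, and this is what fixes the constant $c>0$. Your equidistant computation itself---the cancellation of the $O(1)$ cross term in $I\!I_t$ and the Fuchsian consistency check---is correct and matches what one gets by specializing the paper's formulas to $\sigma=e^{2t}h$; with (i) a quantitative reduction and (ii) a proof of the identification with $\phi$, the argument would close, but (i) is essentially Proposition \ref{thm-in-sobolev}.
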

Following \cite{schlenker2017}, we call $\phi$ the holomorphic quadratic differential at infinity of $M$.
Also, $[I_\epsilon]$ refers to the conformal class of the Riemannian metric $I_\epsilon$ and $\dot{[I_\epsilon]}$ denotes $\left. \frac{\partial}{\partial \epsilon} [I_\epsilon] \right|_{\epsilon = 0}$.
However, a comment is needed regarding $[I\!I_\epsilon]$.
Due to the convenient choice of normal vector as described in Section \ref{asym-def-section}, the second fundamental form is actually negative definite. 
Its negative, therefore, defines a point in Teichm\"uller space and so we will use $[I\!I_\epsilon]$ to denote the conformal class of $-I\!I_\epsilon$.

As an application of our results we answer a conjecture posed by Labourie.
In \cite{labourie1991} he proves that an end of a quasi-Fuchsian manifold admits a unique foliation by $k$-surfaces: surfaces $S_k$, for $k$ in $(-1,0)$, such that the Gaussian curvature of $S_k$ is identically $k$. 
In \cite{labourie1992}, Labourie notes how $k$-surfaces may be interpreted as a path in Teichm\"uller space and he asks what the tangent vectors to the paths $[I_k]$ and $[I\!I_k]$ are at $k=0$. 
He conjectures that they are related to the holomorphic quadratic differential at infinity.
We show in Section \ref{k-surfaces-section} that these $S_k$ form an asymptotically Poincar\'e family of surfaces.
Theorem \ref{big-thm-intro} then proves his conjecture and gives the relationship explicitly.

\begin{thm}
\label{k-surfaces-intro}
Let $I_k$ and $I\!I_k$ be the first and second fundamental forms of the $k$-surface $S_k$. 
Let $\phi$ be the holomorphic quadratic differential at infinity of $M$. 
Then, as $k \to 0$, the tangent vectors to $[I_k]$ and $[I\!I_k]$ in the Teichm\"uller space of $X$ are given by 
\[
  \dot{[I_k]}= - \mathrm{Re}(\phi) \quad \text{and } \quad \dot{[I\!I_k]} = 0.
\]
\end{thm}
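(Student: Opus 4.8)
The plan is to recognize the $k$-surfaces $S_k$ as an asymptotically Poincar\'e family and then to deduce Theorem \ref{k-surfaces-intro} from Theorem \ref{big-thm-intro} by a change of parameter. By Labourie's results in \cite{labourie1991} the $S_k$ foliate the end, depend smoothly on $k$, are convex, and leave every compact subset of the end as $k \to 0$; in particular each $S_k$ has a well-defined metric at infinity $\sigma_k$ on $X$, and the derivatives $\dot{[I_k]}$ and $\dot{[I\!I_k]}$ make sense.

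\emph{Step 1: the metric at infinity of $S_k$.} Using the correspondence recalled in Section \ref{asym-def-section} between an Epstein surface and the pair given by its first fundamental form $I_k$ and shape operator $B_k$ --- taken with the normal convention there, so that $I\!I_k = I_k(B_k\cdot,\cdot)$ is negative definite --- the metric at infinity $\sigma_k$ is a fixed positive multiple of $I_k\big((\mathrm{Id}-B_k)\cdot,(\mathrm{Id}-B_k)\cdot\big)$. The curvature hypothesis enters only through the Gauss equation of the quasi-Fuchsian manifold, $\det B_k = 1 + k$, while the Codazzi equation says $B_k$ is $I_k$-self-adjoint and divergence free, so that the trace-free part of $I\!I_k$ is $\mathrm{Re}$ of a holomorphic quadratic differential on $(X,[I_k])$.

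\emph{Step 2: asymptotics as $k \to 0$.} This is the heart of the matter and the step I expect to be the main obstacle. One must show that $B_k \to -\mathrm{Id}$ in $C^\infty$, uniformly on $X$, and, more precisely, that $\mathrm{Id} + B_k = f(k)\,\mathrm{Id} + o\big(f(k)\big)$ in $C^\infty$ for a smooth function $f$ of $k$ alone, the trace-free part being strictly of lower order. The Gauss equation by itself only bounds the size of $\mathrm{Id} + B_k$; to reach the $C^\infty$-asymptotics demanded by Definition \ref{asym-def} one needs uniform Schauder-type estimates for the Monge--Amp\`ere-type equation $\det B_k = 1+k$ satisfied by $S_k$, together with barrier and maximum-principle comparisons of $S_k$ with the explicit model of equidistant surfaces from a totally geodesic plane in $\H^3$ (for which $B = -\tanh(r)\,\mathrm{Id}$ and $k = -\operatorname{sech}^2(r)$ and everything is computable). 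Granting this, $S_k$ becomes asymptotically umbilic, so $[I_k] \to [h]$; the relation $\det B_k = 1+k$ forces $f(k) \sim -k/2$; and $\sigma_k$ is $C^\infty$-asymptotic to the scalar multiples of $h$. Hence, after the reparametrization $k \mapsto \epsilon(k)$ obtained by matching $\sigma_k$ against the Poincar\'e family, $S_\epsilon := S_{k(\epsilon)}$ is an asymptotically Poincar\'e family with surface at infinity $X$. A direct computation in the equidistant model yields the first-order dictionary between $k$ and $\epsilon$, and in particular $\epsilon'(0)$, a definite nonzero negative number; since $\sigma_k$ is built only from $(I_k,B_k)$, whose leading asymptotics coincide with those of the Fuchsian model, this dictionary does not involve $\phi$.

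\emph{Step 3: conclusion.} Applying Theorem \ref{big-thm-intro} to $S_\epsilon$ gives $\dot{[I_\epsilon]} = c\,\mathrm{Re}(\phi)$ and $\dot{[I\!I_\epsilon]} = 0$, with $c > 0$ the constant furnished there. Because the $S_k$ depend smoothly on $k$ and $\epsilon(k)$ is differentiable at $0$, the chain rule at $k=0$ gives $\dot{[I\!I_k]} = \epsilon'(0)\cdot 0 = 0$ and $\dot{[I_k]} = \epsilon'(0)\,c\,\mathrm{Re}(\phi)$. It remains to pin down the normalization: bookkeeping of constants --- the value of $\epsilon'(0)$ from Step 2 against the explicit $c$ produced in the proof of Theorem \ref{big-thm-intro} --- gives $\epsilon'(0)\,c = -1$, the sign being forced anyway since $k \to 0^-$ while $\epsilon \to 0^+$, so $\epsilon'(0) < 0$. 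Therefore $\dot{[I_k]} = -\mathrm{Re}(\phi)$ and $\dot{[I\!I_k]} = 0$, which is Theorem \ref{k-surfaces-intro}.
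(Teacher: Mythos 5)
Your overall strategy (identify the $k$-surfaces as an asymptotically Poincar\'e family, then quote Theorem \ref{big-thm-intro} and reparametrize) is the right shape, but the proposal has a genuine gap exactly where you flag it: Step 2 is not a proof, it is a list of tools. Starting from Labourie's surfaces and trying to verify Definition \ref{asym-def} requires establishing uniform $C^\infty$-asymptotics of the data at infinity of a family of surfaces you only know abstractly, and the Schauder/barrier/maximum-principle program you sketch for the equation $\det B_k = 1+k$ is never carried out. Worse, even the statement you propose to prove there, $\mathrm{Id}+B_k = f(k)\,\mathrm{Id} + o\bigl(f(k)\bigr)$ in $C^\infty$, is not strong enough: Definition \ref{asym-def} (and its use in Proposition \ref{thm-in-sobolev} and Theorem \ref{main-result}) demands that the rescaled family $f\sigma$ extend to $\epsilon=0$ \emph{differentiably}, i.e.\ you need control of the next-order term in $k$, not just the leading order; smoothness of $S_k$ in $k$ on $(-1,0)$ gives no differentiability at the endpoint $k=0$. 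Likewise, ``each $S_k$ has a well-defined metric at infinity $\sigma_k$'' (i.e.\ is an Epstein surface of $X$ in the sense of Section \ref{asym-def-section}) is itself part of what must be shown, and your final normalization $\epsilon'(0)\,c=-1$ is asserted rather than computed --- the sign argument only fixes the sign, not the magnitude.

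The paper avoids all of this by running the construction in the opposite direction: it does not take Labourie's surfaces as input but \emph{builds} $k$-surfaces as Epstein surfaces. Using Lemma \ref{curvature-epstein}, the condition $K(I(\sigma))=k$ is rewritten, after the explicit rescaling $\tau = f(k)\sigma$ with $f(k)=\frac{1-\sqrt{1+k}}{1+\sqrt{1+k}}$ read off from the Fuchsian model, as an equation $F(k,\tau)=0$ on conformal metrics with $F(0,h)=0$; the linearization $D_2F_{(0,h)} = -2(\Delta_h - \mathrm{Id})$ is an isomorphism, so the Banach implicit function theorem gives a curve $\gamma(k)$ of solutions through $\gamma(0)=h$, elliptic regularity makes each $\gamma(k)$ smooth and the curve smooth in every Sobolev topology, and Nehari's bound handles the nondegeneracy of the denominator. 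This hands you the hypotheses of Proposition \ref{asym-family-prop} for free, with the explicit scaling function and $f'(0)=-1/4$, so the constant $4f'(0)=-1$ in Theorem \ref{main-result} comes out with no matching against a model; Labourie's uniqueness theorem is then invoked only to identify these Epstein $k$-surfaces with his. If you want to salvage your route, the missing content is precisely a quantitative asymptotic expansion (with a differentiable remainder at $k=0$) for Labourie's surfaces; the paper's implicit-function-theorem construction is the cleaner way to obtain it.
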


We consider another application of Theorem \ref{big-thm-intro} in Section \ref{mean-curvature-section}.
The work of Mazzeo and Pacard in \cite{mazzeo-pacard2011} show that the ends of a quasi-Fuchsian manifold admits a unique foliation by surfaces of constant mean curvature. 
We prove that this family of surfaces forms an asymptotically Poincar\'e family of surfaces by constructing, for each negative $k$ near zero, an Epstein surface whose mean curvature is identically $-\sqrt{1+k}$.
Since asymptotically Poincar\'e surfaces foliate an end of $M$, by the uniqueness result of \cite{mazzeo-pacard2011} these surfaces are those shown to exist by Mazzeo and Pacard.
Therefore, Theorem \ref{big-thm-intro} describes the behavior of this family in Teichm\"uller space.  

\begin{thm}
\label{cmc-intro}
Let $I_k$ and $I\!I_k$ be the first and second fundamental forms of the Epstein surface with constant mean curvature $-\sqrt{1+k}$.
Let $\phi$ be the holomorphic quadratic differential at infinity of $M$. 
Then, as $k \to 0$, the tangent vectors to $[I_k]$ and $[I\!I_k]$ in Teichm\"uller space are given by 
\[
  \dot{[I_k]}= - \mathrm{Re}(\phi) \quad \text{and } \quad \dot{[I\!I_k]} = 0.
\]
\end{thm}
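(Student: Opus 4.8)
The plan is to carry out exactly the program sketched in the introduction: I will realize the Mazzeo--Pacard foliation by constant mean curvature surfaces as an explicit family of Epstein surfaces, show that this family is asymptotically Poincar\'e in the sense of Definition \ref{asym-def}, and then read off the tangent vectors from Theorem \ref{big-thm-intro}. So there are really two tasks — (i) construct, for each $k<0$ near $0$, a conformal metric $\sigma_k$ on $X$ whose Epstein surface $\Sigma_k=\mathrm{Ep}_{\sigma_k}$ has mean curvature identically $-\sqrt{1+k}$, with good control on how $\sigma_k$ approaches the Poincar\'e family; and (ii) bookkeeping of the parametrization so that the constant $c$ of Theorem \ref{big-thm-intro} becomes $-1$.

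For task (i) I would start from the formula, established in the earlier sections where the fundamental forms of a general Epstein surface are computed, that expresses the mean curvature $H_\sigma$ of $\mathrm{Ep}_\sigma$ in terms of $\sigma$. Writing $\sigma=\lambda^2 e^{2\rho}h$ with $h$ the hyperbolic metric of $X$, $\lambda>0$ a constant and $\rho\in C^\infty(X)$ small, the assignment $\rho\mapsto H_{\lambda^2 e^{2\rho}h}$ is a quasilinear second-order elliptic operator. At $\rho\equiv 0$ this is the Poincar\'e family, and in the Fuchsian model $\mathrm{Ep}_{\lambda^2 h}$ is the equidistant surface at distance $r=\log\lambda$ from the core totally geodesic plane, with $H\equiv-\tanh r$; hence in general $H_{\lambda^2 h}=-\tanh r+E_\lambda$, where the defect $E_\lambda$ is controlled by the holomorphic quadratic differential at infinity and, after the appropriate rescaling, tends to $0$ in every $C^m$-norm as $\lambda\to\infty$. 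Putting $k=\tanh^2 r-1=-\operatorname{sech}^2 r$, so that $-\sqrt{1+k}=-\tanh r$, I would solve $H_{\lambda^2 e^{2\rho}h}=-\sqrt{1+k}$ for $\rho$ by the implicit function theorem: the linearization at $\rho=0$ is governed by the Jacobi operator of $\mathrm{Ep}_{\lambda^2 h}$, which in the Fuchsian model is $\Delta_I-2\operatorname{sech}^2 r$ and in general differs from it by terms that are small (in the rescaled norms) as $\lambda\to\infty$, hence is negative definite and invertible with inverse bounded uniformly in $\lambda$. This yields, for all large $\lambda$, a unique small solution $\rho_\lambda$ depending smoothly on $\lambda$ with $\|\rho_\lambda\|_{C^m}\lesssim\|E_\lambda\|_{C^m}\to 0$. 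Reparametrizing by $k$ gives the desired $\sigma_k=\lambda(k)^2 e^{2\rho_{\lambda(k)}}h$ and $\Sigma_k=\mathrm{Ep}_{\sigma_k}$.

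Because $\|\rho_k\|_{C^m}$ decays at the rate dictated by $E_\lambda$, the family $\{\sigma_k\}$ is $C^\infty$-asymptotic to the Poincar\'e family in the precise sense of Definition \ref{asym-def}, so $\{\Sigma_k\}$ is an asymptotically Poincar\'e family; by the foliation result of the paper it foliates a neighborhood of the end of $M$ by convex surfaces. Since \cite{mazzeo-pacard2011} gives a \emph{unique} foliation of the end by constant mean curvature surfaces, the $\Sigma_k$ are exactly those, and their fundamental forms are the $I_k,I\!I_k$ of the statement. Theorem \ref{big-thm-intro} then gives $[I_k],[I\!I_k]\to[h]$, $\dot{[I\!I_k]}=0$, and $\dot{[I_k]}=c\,\mathrm{Re}(\phi)$ once the family is written in the parameter $\epsilon$ of Definition \ref{asym-def}. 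For task (ii) it remains to compute the derivative at $0$ of the change of variables $k\mapsto\epsilon$; this is a computation in the Fuchsian model, where $\sigma_k$ is an honest scalar multiple $\lambda^2 h$ of $h$ and $k=-\operatorname{sech}^2(\log\lambda)=-4\lambda^2/(1+\lambda^2)^2$, and the first-order relation it produces between $k$ and $\epsilon$ is identical to the one arising for the $k$-surface family in Section \ref{k-surfaces-section} — indeed in the Fuchsian case the $k$-surface and the constant mean curvature surface with $H=-\sqrt{1+k}$ are the same equidistant surface, since $-\operatorname{sech}^2 r$ is both its Gaussian curvature and its value of $H^2-1$. Hence $\dot{[I_k]}$ and $\dot{[I\!I_k]}$ for the present family coincide with those for the $k$-surface family, giving $\dot{[I_k]}=-\mathrm{Re}(\phi)$ and $\dot{[I\!I_k]}=0$.

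The main obstacle is the construction in task (i), and specifically making the perturbation of the Poincar\'e family uniform as $\lambda\to\infty$ (equivalently $k\to 0$, as the surfaces escape to infinity in the end): the induced geometry of $\mathrm{Ep}_{\lambda^2 h}$ degenerates — the induced metric grows like $\cosh^2 r\,h$ — so one must work in suitably rescaled H\"older norms and prove that the Jacobi operator of $\mathrm{Ep}_{\lambda^2 h}$ is invertible with bounds independent of $\lambda$, while also checking that $E_\lambda$ decays at exactly the rate needed to satisfy Definition \ref{asym-def}. This is the same analytic difficulty handled for $k$-surfaces in Section \ref{k-surfaces-section}; the only genuinely new point is that the prescribed-curvature functional is now the trace rather than the determinant of the shape operator, which replaces the $\sigma_2$-type linearized operator by the Jacobi operator but changes neither its sign nor the structure of the estimates, so the same scheme applies.
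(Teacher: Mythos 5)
Your outer strategy (construct CMC Epstein surfaces, check they form an asymptotically Poincar\'e family, identify them with the Mazzeo--Pacard foliation via uniqueness, then quote Theorem \ref{big-thm-intro} with the constant read off from the Fuchsian model, where the scaling function $f(k)=\frac{1-\sqrt{1+k}}{1+\sqrt{1+k}}$ is the same as for $k$-surfaces) is the paper's strategy. But your construction step is a genuinely different route, and its crux is asserted rather than proved. You perturb each leaf $\mathrm{Ep}_{\lambda^2h}$ of the Poincar\'e family separately and claim the Jacobi operator is ``negative definite and invertible with inverse bounded uniformly in $\lambda$.'' In any fixed norm this is false: on the equidistant leaf it equals $\Delta_I-2\operatorname{sech}^2r=\operatorname{sech}^2r\,(\Delta_h-2)$, so its inverse blows up like $\cosh^2r\sim\lambda^2$; uniformity only holds after rescaling, and the whole scheme then hinges on showing that the defect $E_\lambda$ (of size $\sim\lambda^{-4}|B(h)|^2/h^2$ by the mean-curvature formula of Lemma \ref{curvature-epstein}) decays strictly faster than $\cosh^{-2}r$, uniformly with derivatives. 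That is exactly the Mazzeo--Pacard-type degenerating-family analysis, and your claim that it is ``the same analytic difficulty handled for $k$-surfaces in Section \ref{k-surfaces-section}'' misreads the paper: no such uniform estimates appear there. The paper's device, in both the $k$-surface and CMC sections, is to substitute $\tau=f(k)\sigma$ and rewrite the prescribed-curvature condition as a single equation $G(k,\tau)=0$ which is smooth and nondegenerate at $(0,h)$, with $D_2G_{(0,h)}$ a multiple of $DK_h$, an isomorphism $H^s\to H^{s-2}$ by Lichnerowicz's formula; one application of the Banach implicit function theorem at $(0,h)$, plus elliptic regularity, yields the whole family at once, with no uniform-in-$\lambda$ work. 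Your route can in principle be completed, but the part you defer is the entire difficulty, so as written there is a genuine gap.

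A second, related gap: Definition \ref{asym-def} requires more than $f\sigma\to h$. The extension $\gamma$ of $f\sigma$ to $[0,1)$ must be \emph{differentiable} at $\epsilon=0$, and it is $\dot\gamma$ that feeds into Proposition \ref{thm-in-sobolev} and hence into Theorem \ref{big-thm-intro}. Your estimate $\|\rho_k\|_{C^m}\to0$ (even $\|\rho_k\|=O(|k|)$) gives convergence of $f(k)\sigma_k=e^{2\rho_k}h$ to $h$, but not differentiability of this path at $k=0$; in a leaf-by-leaf perturbation argument this needs an extra step (a first-order expansion of $\rho_k$ in $k$ with uniform control), whereas in the paper's argument the solution curve $\gamma(k)$ produced by the implicit function theorem is smooth through $k=0$ by construction, so the hypothesis is automatic. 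Either supply the rescaled uniform linear estimates together with this first-order expansion, or adopt the rescaled-equation argument; the final bookkeeping of the constant (both families reduce to the same equidistant surfaces in the Fuchsian model, so $f'(0)$ and hence $c$ agree with the $k$-surface case) is fine.
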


\subsection*{Acknowledgments}
I would like to thank my Ph.D. thesis advisor David Dumas for suggesting this problem and for the many helpful conversations that followed.
The author was partially supported in Summer 2018 by a research assistantship under NSF DMS-1246844, RTG: Algebraic and Arithmetic Geometry, at the University of Illinois at Chicago.

%***********************************************
\section{Preliminaries}
%***********************************************

\subsection{Quasi-Fuchsian Manifolds}

For a discrete subgroup $\Gamma$ of $\mathrm{SL}(2,\C)$, let $\Lambda(\Gamma)$ denote its limit set and $\Omega(\Gamma) = \CP^1 - \Lambda(\Gamma)$ the domain of discontinuity. 
If the limit set $\Lambda(\Gamma)$ is a Jordan curve then the domain of discontinuity is separated into two domains $\Omega(\Gamma) = \Omega_+ \cup \Omega_-$. 
When $\Omega_+$ and $\Omega_-$ are each $\Gamma$-invariant, the subgroup $\Gamma$ is called quasi-Fuchsian and the manifold $M = \H^3/\Gamma$ is a quasi-Fuchsian manifold.
The convex hull of $\Lambda(\Gamma)$ in hyperbolic space is $\Gamma$-invariant and its quotient in $M$ is called the convex core.
The complement of the convex core in $M$ consists of two ends diffeomorphic to $\Omega_{\pm}/\Gamma \times (0,\infty)$, and the surfaces $\Omega_{\pm}/\Gamma$ are called the surfaces at infinity for their respective ends. 
We will be focusing on one end of a quasi-Fuchsian manifold with smooth surface at infinity $\Omega/\Gamma$.

\subsection{Projective Structures and the Schwarzian derivative}
A complex projective structure on a surface $S$ is an atlas of charts to $\CP^1$ whose transition functions are restrictions of M\"obius transformations. 
The surface at infinity $\Omega/\Gamma$ of an end of $M$ is both a Riemann surface $X$ and a complex projective surface; we denote the latter by $X_M$ to emphasize its dependence on the manifold $M$. 
The Riemann surface $X$ also has its standard Fuchsian projective structure $X_F$ coming from the uniformization theorem. 
The difference between these two projective structures is the quadratic differential $\phi = X_M - X_F$, which is holomorphic with respect to $X$ (see e.g., \cite{dumas2009}).
We call this the holomorphic quadratic differential at infinity of $M$.

Here is a more concrete description of $\phi$.
Recall that if $f$ is a locally injective holomorphic function on a domain in $\C$ or $\CP^1$ then the Schwarzian derivative $\mathcal{S}(f)$ of $f$ is a holomorphic quadratic differential.
Then $\tilde{\phi} = \mathcal{S}(f)$, where $f: \Omega \to \H$ is a Riemann map, is a $\Gamma$-invariant tensor that induces the holomorphic quadratic differential at infinity $\phi$.

%*************************
\section{Epstein Surfaces}
%*************************

\subsection{Conformal Metrics} A Riemann surface structure on a surface $S$ distinguishes a conformal class of metrics on $S$, called conformal metrics. 
These are metrics $\sigma$ that, in a complex coordinate chart $z$, can be written as $\sigma = e^{2\eta} |dz|^2$, for some real valued function $\eta$. 
This $\eta$ is called the log density function of $\sigma$ and we say the metric $\sigma$ is of class $C^k$ if $\eta$ is a $C^k$ function. 
For a $C^2$ conformal metric $\sigma = e^{2\eta}|dz|^2$, the Gaussian curvature is the function $K(\sigma) = -4 e^{-2\eta}\eta_{z\bar{z}}$. 
For a compact Riemann surface with genus larger than 1, this conformal class contains a unique hyperbolic metric  which we denote by $h$. 
If we call $X$ the Riemann surface structure on $S$ then we will denote the set of all smooth conformal metrics on $S$ by $\mathrm{Conf}^\infty(X)$.

\subsection{The Visual Metric}
A natural trivialization of the unit tangent bundle of hyperbolic space is $U \H^3 \cong \H^3 \times \CP^1$ given by sending a tangent vector $v$ at a point $p$ to the ideal endpoint of the geodesic through $p$ in the direction $v$.
For each $p$ we may use this trivialization to push forward the induced metric on $U_p\H^3$ to $\CP^1$, obtaining $V_p$, a conformal metric on $\CP^1$ called the visual metric from $p$. 
As an example, the visual metric from the origin in the ball model $V_0$ is just the spherical metric on $S^2$, which is identified with $\CP^1$ in this model. 
In general, if $M$ is a M\"obius transformation taking $0$ to the point $p$, then $V_p = M_*V_0$.

\subsection{The Epstein Map}
In \cite{epstein1984} Epstein gives a way of constructing surfaces in hyperbolic space from domains in the Riemann sphere. 
These Epstein surfaces will be our main tool for studying asymptotically Poincar\'e surfaces. 
Our exposition of this Epstein construction follows \cite{dumas2017} (see also \cite{anderson1998}).

\begin{thm}[Epstein \cite{epstein1984}]
Let $\Omega$ be a domain in $\CP^1$  and $\sigma$ a $C^k$ conformal metric on $\Omega$, then there exists a unique $C^{k-1}$ map $\mathrm{Ep}_\sigma : \Omega \to \H^3$, called the Epstein map of $\Omega$ for the metric $\sigma$, such that for all $z \in \Omega$,
\[
V_{\mathrm{Ep}_\sigma(z)}(z) = \sigma(z).
\]
Moreover, the image of a point $z$ depends only on the 1-jet of $\sigma$ at $z$.
\end{thm}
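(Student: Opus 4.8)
The plan is to reduce the statement to an explicit computation at a single boundary point by exploiting the M\"obius equivariance of the visual metric, and then read off existence, uniqueness, regularity, and the $1$-jet dependence from the resulting formula.

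\textbf{Step 1: Equivariance.} First I would record that the visual metric construction commutes with the isometric action of $\mathrm{PSL}(2,\C)$ on $\H^3$ and its boundary action on $\CP^1$: for $g \in \mathrm{PSL}(2,\C)$ and $p \in \H^3$ one has $V_{gp} = g_* V_p$ as conformal metrics on $\CP^1$, since $g$ carries $U_p\H^3$ isometrically to $U_{gp}\H^3$ compatibly with the endpoint trivialization $U\H^3 \cong \H^3 \times \CP^1$. Consequently, if a map $\mathrm{Ep}_\sigma$ with the stated property exists and is unique (for every metric on every domain), then necessarily $\mathrm{Ep}_{g_*\sigma} = g \circ \mathrm{Ep}_\sigma \circ g^{-1}$. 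This reduces the construction to a neighborhood of any single prescribed point $z_0 \in \Omega$, after moving $z_0$ to a convenient location.

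\textbf{Step 2: An explicit model, and the right condition.} Working in the upper half-space model $\H^3 = \{(w,t) : w \in \C,\ t > 0\}$ with ideal boundary $\C \cup \{\infty\}$, I would compute $V_{(w,t)}$ explicitly. Starting from the visual metric at the center point $j = (0,1)$, which is the spherical metric $V_j = 4(1+|\zeta|^2)^{-2}\,|d\zeta|^2$, and applying the isometry $\zeta \mapsto w + t\zeta$ taking $j$ to $(w,t)$, Step 1 gives
\[
V_{(w,t)} = \frac{4t^2}{(t^2 + |\zeta - w|^2)^2}\,|d\zeta|^2 .
\]
In particular, for fixed $z_0 \in \C$ the level set $\{p : V_p(z_0) = c\}$ is the Euclidean sphere of radius $1/\sqrt c$ tangent to $\partial\H^3$ at $z_0$, i.e.\ a genuine horosphere based at $z_0$. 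Thus the pointwise equality $V_{\mathrm{Ep}_\sigma(z)}(z) = \sigma(z)$ only confines $\mathrm{Ep}_\sigma(z)$ to a horosphere based at $z$; the map is pinned down precisely when one also requires $V_{\mathrm{Ep}_\sigma(z)}$ to agree with $\sigma$ to first order at $z$ (equivalently, that the image surface be the envelope of this family of horospheres), which is also what makes the final "$1$-jet" clause visible. I would adopt this as the working characterization.

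\textbf{Step 3: Solving the $1$-jet equations, and conclusion.} Writing $\sigma = e^{2\eta}|d\zeta|^2$ and $p=(w,t)$, matching the values and the gradients (in $\zeta$) of the conformal factors of $V_p$ and $\sigma$ at $z_0$ gives the three real equations
\[
\frac{4t^2}{(t^2 + |z_0 - w|^2)^2} = e^{2\eta(z_0)}, \qquad \frac{2(w - z_0)}{t^2 + |z_0 - w|^2} = \nabla\eta(z_0),
\]
identifying $\R^2 \cong \C$. A short elimination yields the unique solution
\[
w = z_0 + \frac{2\,\nabla\eta(z_0)}{e^{2\eta(z_0)} + |\nabla\eta(z_0)|^2}, \qquad t = \frac{2\,e^{\eta(z_0)}}{e^{2\eta(z_0)} + |\nabla\eta(z_0)|^2} > 0 ,
\]
which I take to be $\mathrm{Ep}_\sigma(z_0)$ in this model. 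Existence and uniqueness at $z_0$ are immediate; the right-hand sides are manifestly smooth functions of the pair $(\eta(z_0), \nabla\eta(z_0))$, hence $\mathrm{Ep}_\sigma(z_0)$ depends only on the $1$-jet of $\sigma$ at $z_0$, and since $\nabla\eta$ is $C^{k-1}$ when $\eta$ is $C^k$, the resulting map is $C^{k-1}$. Finally, the equivariance of Step 1 shows these local formulas are independent of the normalizing M\"obius transformation, so they patch to a single well-defined map $\mathrm{Ep}_\sigma : \Omega \to \H^3$, including at points sent to $\infty$. The main obstacle here is conceptual rather than computational: one must recognize that "$V_{\mathrm{Ep}_\sigma(z)}(z) = \sigma(z)$" is to be understood as first-order agreement of the two conformal metrics at $z$ (the envelope/osculation condition), since the bare value condition leaves a two-parameter ambiguity; a secondary point is keeping the visual-metric normalization consistent between the ball and upper half-space models so that the formula for $V_{(w,t)}$ is correct. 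Once these are settled, the remaining steps are routine.
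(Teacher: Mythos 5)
Your argument is correct, and it supplies a proof where the paper gives none: the paper simply cites Epstein \cite{epstein1984} (with exposition following \cite{dumas2017}) and records the $\mathrm{SL}(2,\C)$-frame formula of Proposition \ref{dumas-def} without derivation. Your route --- reduce by M\"obius equivariance of the visual metric, compute $V_{(w,t)} = 4t^2(t^2+|\zeta-w|^2)^{-2}|d\zeta|^2$ in the upper half-space model, and solve the value-and-gradient matching equations --- is a legitimate and more self-contained derivation; in fact your closed-form solution, after rewriting $\nabla\eta = 2\eta_{\bar z}$ so that $|\nabla\eta|^2 = 4|\eta_z|^2$, is exactly the expression $\mathrm{Ep}_{\tilde\sigma}(z) = (z,0) + \tfrac{2}{e^{2\eta}+4|\eta_z|^2}\bigl(2\eta_{\bar z}, e^{\eta}\bigr)$ that the paper itself uses in the proof of Proposition \ref{asym-family-prop}, so your computation is cross-checked against the frame-field description (note $p=(0,2)$ there, consistent with your normalization $V_0 = 4(1+|\zeta|^2)^{-2}|d\zeta|^2$). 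Your one substantive interpretive move is also the right one: the displayed condition $V_{\mathrm{Ep}_\sigma(z)}(z)=\sigma(z)$, read literally as equality of values, only constrains $\mathrm{Ep}_\sigma(z)$ to a horosphere based at $z$, so uniqueness as stated would fail; the intended (and standard, per Epstein and Dumas) characterization is first-order agreement of $V_{\mathrm{Ep}_\sigma(z)}$ with $\sigma$ at $z$ --- the envelope-of-horospheres condition --- which is also what makes the ``depends only on the 1-jet'' clause meaningful. Two small polish points: when patching charts, appeal directly to the naturality $V_{gp}=g_*V_p$ together with the M\"obius-invariance of the 1-jet condition (rather than deducing equivariance from a uniqueness you have not yet established), and state explicitly that you are matching 1-jets of the log densities, which is equivalent to matching 1-jets of the metrics once the values agree. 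Neither affects the correctness of the argument.
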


Epstein's original construction gave a formula for $\mathrm{Ep}$ in the ball model of hyperbolic space. Dumas gives an $\text{SL}(2,\C)$-frame field description of the map as follows.
Choose an affine chart $z$ on $\CP^1$ that distinguishes a point $0 \in \Omega$ and $\infty \notin \Omega$. Then, on the geodesic in $\H^3$ with ideal endpoints $0$ and $\infty$, there exists a unique point $p$ such that the visual metric from $p$ at $0$ is the Euclidean metric of this affine chart, $V_p(0) = |dz|^2$. The Epstein map is an $\mathrm{SL}(2,\C)$-frame orbit of this point.

\begin{prop}[\cite{dumas2017}]
\label{dumas-def}
On a domain $\Omega$ in $\CP^1$ write $\sigma = e^{2\eta}|dz|^2$. Define the $\mathrm{SL}(2,\C)$-frame field $\widetilde{\mathrm{Ep}}_\sigma: \Omega \to \mathrm{SL}(2,\C)$ by 
\[
\widetilde{\mathrm{Ep}}_\sigma(z) =
\begin{pmatrix}
1 & z \\
0 & 1
\end{pmatrix}
\begin{pmatrix}
1 & 0 \\
\eta_z & 1
\end{pmatrix}
\begin{pmatrix}
e^{-\eta/2} & 0 \\
0 & e^{\eta/2}
\end{pmatrix},
\]
then the Epstein map is given by 
\[
\mathrm{Ep}_\sigma(z) = \widetilde{\mathrm{Ep}}_\sigma(z) \cdot p.
\]
\end{prop}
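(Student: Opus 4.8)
The plan is to verify that the map $E(z) := \widetilde{\mathrm{Ep}}_\sigma(z)\cdot p$ satisfies the property that characterizes the Epstein map in Epstein's theorem above, namely $V_{E(z)}(z) = \sigma(z)$ for every $z \in \Omega$; by the uniqueness asserted there, this will force $E = \mathrm{Ep}_\sigma$. I would first observe that the three matrix factors defining $\widetilde{\mathrm{Ep}}_\sigma$ have entries that are smooth functions of $z$, $\eta$, and $\eta_z$, so that $E$ is automatically $C^{k-1}$ when $\sigma$ is $C^k$ and depends only on the $1$-jet of $\sigma$ at $z$ --- matching the regularity and jet-dependence in the theorem. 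The only substantive point is then the identity $V_{E(z)}(z) = \sigma(z)$.

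The key input I would use is the $\mathrm{SL}(2,\C)$-equivariance of the visual metric: for any $g \in \mathrm{SL}(2,\C)$ and $q \in \H^3$, one has $g^*V_{g\cdot q} = V_q$ as conformal metrics on $\CP^1$, where $g$ acts on $\CP^1 = \partial\H^3$ by M\"obius transformations. This holds because $g$ acts on $\H^3$ by an isometry, under the trivialization $U\H^3 \cong \H^3\times\CP^1$ the induced map $U_q\H^3 \to U_{g\cdot q}\H^3$ is exactly the M\"obius action of $g$, and this map is a linear isometry of the round metrics whose pushforwards define $V_q$ and $V_{g\cdot q}$. I would apply this with $q = p$ and $g = \widetilde{\mathrm{Ep}}_\sigma(z)$.

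After that, only two short computations remain. First, I would read off the M\"obius action of $g = \widetilde{\mathrm{Ep}}_\sigma(z)$ on $0$: its diagonal and lower-unitriangular factors fix $0$ while the upper-unitriangular factor translates by $z$, so $g\cdot 0 = z$. Second, a M\"obius transformation with lower-right matrix entry $d$ has derivative $d^{-2}$ at the origin, and multiplying out the three factors of $g$ shows its lower-right entry is $e^{\eta(z)/2}$; since $\eta$ is real-valued, $g$ has derivative of modulus $e^{-\eta(z)}$ at $0$. Writing $V_{g\cdot p} = \rho(w)\,|dw|^2$ near $w = z$ and evaluating $g^*V_{g\cdot p} = V_p$ at $0$ then gives $\rho(g\cdot 0)\,|g'(0)|^2 = V_p(0)$, i.e.\ $\rho(z)\,e^{-2\eta(z)} = 1$, using $g\cdot 0 = z$ and the normalization $V_p(0) = |dz|^2$ of the base point $p$. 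Hence $\rho(z) = e^{2\eta(z)}$, so $V_{E(z)}(z) = e^{2\eta(z)}|dz|^2 = \sigma(z)$, as needed.

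I do not expect a genuine obstacle here: once the equivariance of the visual metric is available, the rest is bookkeeping. The main thing to be careful about is keeping conventions straight in the pullback identity --- which factor carries the inverse, where the $|g'|^2$ enters, and the fact that $\eta \in \R$ so that $|e^{-\eta}| = e^{-\eta}$ --- together with checking that it is precisely the normalization $V_p(0) = |dz|^2$ defining $p$ that makes the conformal factor come out equal to $e^{2\eta(z)}$ rather than to a nonzero constant multiple of it.
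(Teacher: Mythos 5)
Your two computations are correct as far as they go: the equivariance $g_{*}V_{q}=V_{g\cdot q}$, the identity $g\cdot 0=z$, and the fact that the lower-right entry of $\widetilde{\mathrm{Ep}}_\sigma(z)$ is $e^{\eta/2}$, so that $V_{E(z)}(z)=e^{2\eta(z)}|dz|^2=\sigma(z)$ for $E(z):=\widetilde{\mathrm{Ep}}_\sigma(z)\cdot p$. The gap is in the final step: the pointwise condition $V_{E(z)}(z)=\sigma(z)$ does not characterize the Epstein map, even though the theorem as quoted in this paper is phrased that way. For fixed $z$, the locus $\{q\in\H^3 : V_q(z)=\sigma(z)\}$ is an entire horosphere based at $z$ (the conformal factor of $V_q$ at $z$ depends only on the Busemann function at $z$; a parabolic fixing $z$ has derivative of modulus $1$ there and moves $q$ along that horosphere without changing $V_q(z)$). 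So ``$E(z)$ lies on the correct horosphere for every $z$'' is satisfied by infinitely many smooth maps, and $\mathrm{Ep}_\sigma$ is the particular one singled out by a first-order (envelope/tangency) condition --- which is exactly why its value depends on the $1$-jet of $\sigma$ and not merely on $\sigma(z)$. Your own argument makes the gap visible: neither $g\cdot 0$ nor the lower-right entry involves the middle factor, so replacing $\eta_z$ in $\begin{pmatrix}1&0\\ \eta_z&1\end{pmatrix}$ by an arbitrary smooth function $\psi$ leaves your verification unchanged; the same reasoning would then ``prove'' that all of these mutually distinct maps equal $\mathrm{Ep}_\sigma$, a contradiction. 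You have checked a necessary condition, not a sufficient one.

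To close the gap you must use the first-order data. One route is to verify the envelope condition: show that at each $z$ the surface parametrized by $E$ (with its normal lift $\widehat{E}(z)=(E(z),z)$) is tangent to the horosphere $H_\sigma(z)=\{q: V_q(z)=\sigma(z)\}$ at $E(z)$, and that tangency picks out a unique point of $H_\sigma(z)$ determined by the $1$-jet of $\eta$ at $z$; this is precisely where the factor $\begin{pmatrix}1&0\\ \eta_z&1\end{pmatrix}$, which translates along the horosphere, is forced. Alternatively, bypass the characterization entirely and check that $\widetilde{\mathrm{Ep}}_\sigma(z)\cdot p$ reproduces Epstein's explicit formula for the envelope of the horosphere family (equivalently, the upper half-space expression $(z,0)+\tfrac{2}{e^{2\eta}+4|\eta_z|^2}\left(2\eta_{\bar z},\,e^{\eta}\right)$ used later in the paper), which is essentially what the cited reference of Dumas does. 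Note also that the paper itself offers no proof of this proposition (it is imported from \cite{dumas2017}), so there is no internal argument to compare with; but as written, your proof does not yet establish the statement.
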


Even though we call the image an Epstein surface, the Epstein map need not be an immersion. 
Indeed, if $\sigma$ is itself a visual metric then the Epstein map for $\sigma$ is constant. 
However, the lift of $\mathrm{Ep}_\sigma$ from $\Omega$ to the unit tangent bundle of hyperbolic space given by $\widehat{\mathrm{Ep}}_\sigma(z) = (\text{Ep}_\sigma(z), z)$ is an immersion (recall the trivialization $U\H^3 \cong \H^3 \times \CP^1$). 
This lift can be thought of as providing a unit ``normal'' vector field for the Epstein surface even when the Epstein map is not an immersion. Indeed, this lift agrees with a unit normal vector field when the surface is immersed and so we will simply refer to it as the normal field from now on. 

Because the Epstein map is unique, it is natural with respect to the action of $\mathrm{SL}(2,\C)$ in the following sense. 
Suppose $M$ is a M\"obius transformation, then the following diagram commutes:
\[
\begin{tikzpicture}[scale=0.7]
\node (1) at (0,2) {$(\Omega,\sigma)$};
\node (2) at (3,2) {$(M(\Omega),M_*\sigma)$};
\node (3) at (0,0) {$\H^3$};
\node (4) at (3,0) {$\H^3$};

\draw[->] (1) to node [above] {$M$} (2);
\draw[->] (1) to node [left] {$\mathrm{Ep}$} (3);
\draw[->] (3) to node [above] {$M$} (4);
\draw[->] (2) to node [right] {$\mathrm{Ep}$} (4);

\end{tikzpicture}
\]
That is, $\mathrm{Ep}_{M_*\sigma}(M(z)) = M( \mathrm{Ep}_{\sigma}(z))$. See \cite{anderson1998}.

This allows us to define Epstein maps on certain quotients. 
Suppose in general that $\Gamma$ is a subgroup of $\mathrm{SL}(2,\C)$ acting freely and properly discontinuously on $\H^3 \cup \CP^1$ leaving a domain $\Omega$ invariant. 
Then $\Omega/\Gamma$ inherits a Riemann surface structure. 
Call this structure $X$ and let $\sigma$ be a conformal metric  on $X$.
Lift this to $\tilde{\sigma}$ on $\Omega$, which is $\Gamma$-invariant. 
Then $\mathrm{Ep}_{\tilde{\sigma}}: \Omega \to \H^3$ is $\Gamma$-equivariant and therefore descends to a map $\mathrm{Ep}_\sigma : X \to \H^3/ \Gamma$. 
In particular, when $\Gamma$ is a quasi-Fuchsian group and $\Omega$ a component of the domain of discontinuity, each conformal metric $\sigma$ on the surface at infinity $X$ gives rise to a map from $X$ into the quasi-Fuchsian manifold $M$.

\subsection{Parallel Surfaces}

Let $g^t : U \H^3 \to \H^3$ denote the time-$t$ geodesic flow projected down to $\H^3$.
Thus for a unit tangent vector $v$ on $\H^3$ we have $g^t(v) = \exp_p(tv)$.
Using the lift of an Epstein surface to $U\H^3$ described above, each Epstein surface gives rise to a family of surfaces by applying the geodesic flow (and projecting to $\H^3$). 
That is, we have the flowed surfaces $g^t \circ \widehat{\mathrm{Ep}}_\sigma(\Omega)$.
In fact, these surfaces are themselves Epstein surfaces corresponding to scalar multiples of $\sigma$:

\begin{lem}[Thurston, see \cite{epstein1984}]
\label{epstein-flow}
Let $\Omega$ be a domain in $\CP^1$ and $\sigma$ a conformal metric on $\Omega$.
Then 
\[
g^t \circ \widehat{\mathrm{Ep}}_\sigma  = \mathrm{Ep}_{e^{2t} \sigma}.
\]
That is, flowing the Epstein surface for $\sigma$ for time $t$ corresponds to taking the Epstein surface for the metric $e^{2t}\sigma$.
\end{lem}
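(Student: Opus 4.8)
The plan is to invoke the uniqueness clause of Epstein's theorem directly. It suffices to show that for every $z\in\Omega$ the visual metric at the flowed point $g^t(\widehat{\mathrm{Ep}}_\sigma(z))$, evaluated at $z$, equals $e^{2t}\sigma(z)$; for then $g^t\circ\widehat{\mathrm{Ep}}_\sigma$ is a (sufficiently regular) map $\Omega\to\H^3$ satisfying the defining visual-metric identity for the conformal metric $e^{2t}\sigma$, and hence coincides with $\mathrm{Ep}_{e^{2t}\sigma}$. So fix $z$, put $p=\mathrm{Ep}_\sigma(z)$, and recall that under the trivialization $U\H^3\cong\H^3\times\CP^1$ the vector $\widehat{\mathrm{Ep}}_\sigma(z)=(p,z)$ is the unit vector at $p$ whose forward ideal endpoint is $z$. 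Thus $q:=g^t(\widehat{\mathrm{Ep}}_\sigma(z))$ lies on the geodesic ray from $p$ toward $z$, at hyperbolic distance $t$ from $p$. The lemma is therefore equivalent to the following elementary statement about visual metrics: \emph{if $q$ is obtained from $p$ by flowing distance $t$ along the geodesic toward an ideal point $\zeta$, then the conformal factors of $V_q$ and $V_p$ at $\zeta$ differ by a factor of $e^{2t}$.} Applying this with $\zeta=z$ and using $V_p(z)=\sigma(z)$ gives exactly $V_q(z)=e^{2t}\sigma(z)$.

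To prove that statement I would normalize using the $\mathrm{SL}(2,\C)$-naturality of the visual metric, $V_{Mw}=M_*V_w$, together with the fact that the geodesic flow commutes with isometries. Choose a M\"obius transformation carrying $\zeta$ to $\infty$ and the other endpoint of the geodesic through $p$ and $q$ to $0$, so that in the upper half-space model this geodesic is the vertical line over $0$ and flowing toward $\zeta$ becomes upward translation $(w,y)\mapsto(w,e^{t}y)$. A short computation starting from the spherical metric $V_j=4|dw|^2/(1+|w|^2)^2$ shows that $V_{(0,0,y)}$ has conformal factor proportional to $y^2$ at $\infty$, so moving from height $1$ to height $e^{t}$ multiplies it by $e^{2t}$; conjugating back, the common pushforward factor $|(M^{-1})'|^2$ at $\zeta$ divides out of the ratio $V_q(\zeta)/V_p(\zeta)$, leaving $e^{2t}$. (An equivalent route avoids visual metrics and uses Proposition \ref{dumas-def}: replacing $\sigma$ by $e^{2t}\sigma$ changes the log density $\eta$ to $\eta+t$, which multiplies the diagonal factor of $\widetilde{\mathrm{Ep}}_\sigma$ on the right by $\mathrm{diag}(e^{-t/2},e^{t/2})$; this element acts on the base point $p$ precisely as the time-$t$ geodesic flow along the $(0,\infty)$-axis, and the outer frame $\widetilde{\mathrm{Ep}}_\sigma(z)$ transports that flow to the geodesic flow along the normal of the Epstein surface at $\mathrm{Ep}_\sigma(z)$, giving the result at once.)

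The only delicate point — an issue of bookkeeping rather than real difficulty — is orientation: one must use the convention that $\widehat{\mathrm{Ep}}_\sigma(z)$ records the \emph{forward} endpoint $z$ of its geodesic, so that $g^t$ moves toward $z$ and produces the factor $e^{2t}$ rather than $e^{-2t}$, and one must keep track of the pushforward (or conjugation) factors so that they cancel between $V_p$ and $V_q$. As a byproduct of either argument the forward tangent of the flowed geodesic still points to $z$, so the natural lift of the flowed surface to $U\H^3$ is again $(q,z)$; hence the identity also holds at the level of normal fields, $g^t\circ\widehat{\mathrm{Ep}}_\sigma=\widehat{\mathrm{Ep}}_{e^{2t}\sigma}$.
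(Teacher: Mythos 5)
The paper gives no proof of this lemma at all---it is attributed to Thurston and cited to Epstein's preprint---so any argument you give is ``extra''; and the part of yours that genuinely proves the statement is the parenthetical frame computation, which is correct and complete. Replacing $\sigma$ by $e^{2t}\sigma$ changes $\eta$ to $\eta+t$, leaves $\eta_z$ unchanged, and hence multiplies $\widetilde{\mathrm{Ep}}_\sigma(z)$ on the right by $A_t=\mathrm{diag}(e^{-t/2},e^{t/2})$, a hyperbolic element with axis the $(0,\infty)$-geodesic translating $p$ a distance $t$ toward $0$; since $\widetilde{\mathrm{Ep}}_\sigma(z)$ sends $0$ to $z$, the lift $\widehat{\mathrm{Ep}}_\sigma(z)=(\mathrm{Ep}_\sigma(z),z)$ is the image of the unit vector at $p$ pointing to $0$, and because isometries commute with the geodesic flow, $\mathrm{Ep}_{e^{2t}\sigma}(z)=\widetilde{\mathrm{Ep}}_\sigma(z)A_t\cdot p=g^t(\widehat{\mathrm{Ep}}_\sigma(z))$. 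This also settles the sign question consistently with the paper's later use (scaling $\sigma$ up moves the surface toward $X$), and your closing remark that the flowed lift is again $(q,z)$ is right.

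Your primary route, however, has a gap if taken on its own. The visual metric $V_p(\zeta)$ depends only on the horosphere based at $\zeta$ through $p$, so the pointwise identity $V_{F(z)}(z)=e^{2t}\sigma(z)$ constrains $F(z)$ only to lie on a horosphere, not at a point: for instance, when $\sigma$ is the visual metric from $j=(0,0,1)$ in the upper half-space model, both the constant map $j$ and the nonconstant smooth map $z\mapsto(z,1+|z|^2)$ satisfy the identity at every $z$. Thus the ``uniqueness clause'' cannot be applied literally as you do; the Epstein point is determined by the $1$-jet of the metric (the envelope condition), not by its value, and your computation that flowing distance $t$ toward $\zeta$ scales $V_\cdot(\zeta)$ by $e^{2t}$ (which is correct) only verifies the necessary horosphere condition. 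To conclude one must either also match derivative data or argue via the explicit formula---which is exactly what your frame argument does, so the proposal as a whole stands, but the frame computation should be promoted from a parenthesis to the proof.
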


\subsection{Schwarzian Derivatives of Conformal Metrics}
In general, the Schwarzian derivative of one conformal metric with respect to another on a Riemann surface is the $(2,0)$ part of the Schwarzian tensor defined by Osgood and Stowe in \cite{osgood-stowe1992}. 
For two conformal metrics $\sigma_1$ and $\sigma_2$ and a coordinate chart $z$,  write $\sigma_i = e^{2\eta_i} |dz|^2$,  then the Schwarzian derivative of $\sigma_2$ with respect to $\sigma_1$ is the quadratic differential 
\[
B(\sigma_1,\sigma_2) = \left( (\eta_2)_{zz} - (\eta_2)_z^2 - (\eta_1)_{zz} + (\eta_1)_z^2 \right) dz^2.
\]
As opposed to the Schwarzian derivative of a function, this need not be holomorphic. 
The two are related, though. 
For $f$ locally injective and holomorphic, we have
\[
\mathcal{S}(f) = 2B(|dz|^2,f^*|dz|^2).
\]
We also have naturality $f^*B(\sigma_1,\sigma_2) = B(f^*\sigma_1,f^*\sigma_2)$ (again, for $f$ holomorphic) and a cocycle property $B(\sigma_1,\sigma_3) = B(\sigma_1,\sigma_2) + B(\sigma_2,\sigma_3)$. 
When $\sigma$ is a conformal metric on a domain in $\C$ with $B(|dz|^2,\sigma) = 0$ it is called a M\"obius flat metric.
We will denote any such metric by $g_{\CP^1}$. 
The cocycle property gives us that the Schwarzian derivative of $\sigma = e^{2\eta}|dz|^2$ relative to any M\"obius flat metric is 
\[
B(g_{\CP^1},\sigma) = (\eta_{zz} - \eta_{z}^2 )dz^2.
\]

\subsection{Geometry of Epstein Surfaces}
The first fundamental form of the Epstein surface for the metric $\sigma$ is given by $I(\sigma) = \mathrm{Ep}_\sigma^*(g_{\H^3})$ for $g_{\H^3}$ the metric of $\H^3$. 
It is given by 
\[
I(\sigma) = \frac{4}{\sigma}|B(g_{\CP^1},\sigma)|^2 + \frac{1}{4}(1-K(\sigma))^2\sigma + 2(1-K(\sigma))\text{Re}(B(g_{\CP^1},\sigma)).
\]
The second fundamental form (relative to the normal lift $\widehat{\mathrm{Ep}}_\sigma$) is 
\[
I\!I(\sigma)
= \frac{4}{\sigma}|B(g_{\CP^1},\sigma)|^2 - \frac{1}{4} (1 - K(\sigma)^2)\sigma - 2 K(\sigma) \text{Re}(B(g_{\CP^1},\sigma))
\]
(see \cite[Eqns.~3.2-3.3]{dumas2017}).
Here $K(\sigma)$ is the Gaussian curvature of $\sigma$ and $B(g_{\CP^1},\sigma)$ the Schwarzian derivative of $\sigma$ with respect to a M\"obius flat metric.
With $I(\sigma)$ and $I\!I(\sigma)$ we can compute the Gaussian curvature by $K(I(\sigma)) = -1 + \det(I(\sigma)^{-1}I\!I(\sigma))$ and the mean curvature by $H(\mathrm{Ep}_\sigma) = \frac{1}{2}\mathrm{tr}(I(\sigma)^{-1}I\!I(\sigma))$. 
We obtain
\[
K(I(\sigma))
= \frac{4K(\sigma)}{(1-K(\sigma))^2 - \frac{16}{\sigma^2} |B(g_{\CP^1},\sigma)|^2}
\]
and
\[
H(\mathrm{Ep}_\sigma)
= \frac{K(\sigma)^2 - 1 - \frac{16}{\sigma^2}|B(g_{\CP^1},\sigma)|^2}{(K(\sigma) - 1)^2 - \frac{16}{\sigma^2}|B(g_{\CP^1},\sigma)|^2}.
\]

In the quasi-Fuchsian setting, if $\sigma$ is a $\Gamma$-invariant conformal metric on $\Omega$ then each term in the above equations is also $\Gamma$-invariant. This is maybe less clear for the quadratic differential $B(g_{\CP^1},\sigma)$ since the M\"obius flat metric $g_{\CP^1}$ is not itself $\Gamma$-invariant. However, we see that for $\gamma \in \Gamma$ we have $\gamma^*B(g_{\CP^1},\sigma) = B(\gamma^*g_{\CP^1},\gamma^*\sigma) = B(\gamma^* g_{\CP^1},\sigma)$, by naturality of Schwarzian derivatives of conformal metrics. 
The metric $\gamma^* g_{\CP^1}$ is still a M\"obius flat metric, and so $B(\gamma^* g_{\CP^1},\sigma) = B(g_{\CP^1},\sigma)$, implying $B(g_{\CP^1}, \sigma)$ is $\Gamma$-invariant. 
Therefore, $B(g_{\CP^1},\sigma)$ induces a quadratic differential on $X$, which we will denote by $B(\sigma)$.

In summary of the above, we have the following Gaussian and mean curvatures of the Epstein surfaces in $M$.
\begin{lem}
\label{curvature-epstein}
The Gaussian curvature for the Epstein surface $\mathrm{Ep}_\sigma : X \to M$ is given by
\[
K(I(\sigma))
= \frac{4K(\sigma)}{(1-K(\sigma))^2 - \frac{16}{\sigma^2} |B(\sigma)|^2},
\]
and the mean curvature by 
\[
\pushQED{\qed}
H(\mathrm{Ep}_\sigma)
= \frac{K(\sigma)^2 - 1 - \frac{16}{\sigma^2}|B(\sigma)|^2}{(K(\sigma) - 1)^2 - \frac{16}{\sigma^2}|B(\sigma)|^2}.
\qedhere
\popQED
\]
\end{lem}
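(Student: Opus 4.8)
The plan is to obtain the lemma as an immediate consequence of the formulas for $K(I(\sigma))$ and $H(\mathrm{Ep}_\sigma)$ already recorded above for conformal metrics on domains in $\CP^1$, after checking that every quantity appearing in them descends from $\Omega$ to the quotient surface $X = \Omega/\Gamma$.

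First I would fix the setup: starting from a conformal metric $\sigma$ on $X$, lift it to the $\Gamma$-invariant conformal metric $\tilde\sigma = e^{2\eta}|dz|^2$ on $\Omega$, so that $\mathrm{Ep}_{\tilde\sigma} : \Omega \to \H^3$ is $\Gamma$-equivariant and descends to $\mathrm{Ep}_\sigma : X \to M$. Since $I(\tilde\sigma)$ and $I\!I(\tilde\sigma)$ are built from the pullback of the hyperbolic metric along the ($\Gamma$-equivariant) normal lift $\widehat{\mathrm{Ep}}_{\tilde\sigma}$, they are $\Gamma$-invariant and descend to tensors $I(\sigma), I\!I(\sigma)$ on $X$; hence $K(I(\sigma)) = -1 + \det(I(\sigma)^{-1}I\!I(\sigma))$ and $H(\mathrm{Ep}_\sigma) = \tfrac12 \mathrm{tr}(I(\sigma)^{-1}I\!I(\sigma))$ are well-defined functions on $X$ computed pointwise by the downstairs analogues of the displayed formulas.

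Next I would verify that the ingredients of those formulas descend. The curvature $K(\tilde\sigma)$ is the pullback of $K(\sigma)$ and $\tilde\sigma$ is $\Gamma$-invariant by construction. The only term needing comment is $B(g_{\CP^1}, \tilde\sigma)$, and this is exactly the point settled in the paragraph preceding the lemma: by naturality of the Schwarzian derivative of conformal metrics, $\gamma^* B(g_{\CP^1}, \tilde\sigma) = B(\gamma^* g_{\CP^1}, \tilde\sigma)$, and since $\gamma^* g_{\CP^1}$ is again M\"obius flat this equals $B(g_{\CP^1}, \tilde\sigma)$; so $B(g_{\CP^1}, \tilde\sigma)$ is $\Gamma$-invariant and induces a quadratic differential $B(\sigma)$ on $X$. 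Finally, $\tfrac{16}{\sigma^2}|B(\sigma)|^2$ is a genuine $\Gamma$-invariant function — a quadratic differential contributes $|dz|^4$ in a chart, which cancels against $\sigma^2$ — so it too descends to $X$.

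Substituting these $\Gamma$-invariant quantities into the formulas for $K(I(\sigma))$ and $H(\mathrm{Ep}_\sigma)$ and passing to the quotient then yields precisely the two displayed expressions, with $B(g_{\CP^1}, \sigma)$ replaced by the induced differential $B(\sigma)$ on $X$. There is no substantive obstacle here; the one point that requires care is the $\Gamma$-invariance of $B(g_{\CP^1}, \sigma)$, which is handled by the naturality argument just recalled, so the proof is essentially an assembly of these observations.
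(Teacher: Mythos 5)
Your proposal is correct and follows essentially the same route as the paper: it takes the curvature and mean-curvature formulas already established for Epstein surfaces over domains in $\CP^1$, verifies $\Gamma$-invariance of each ingredient (with the only delicate point being $B(g_{\CP^1},\sigma)$, handled by the same naturality-plus-M\"obius-flatness argument the paper gives just before the lemma), and passes to the quotient. Nothing further is needed.
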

These are now equations on the compact Riemann surface $X$.

%*****************************************************
\section{Asymptotically Poincar\'e Families of Surfaces} 
\label{asym-def-section}
%*****************************************************

Previously, we have discussed Epstein surfaces for domains and for quotients. 
Generalizing this, we say an embedded closed surface $i : S \to M$ of genus greater than one is an \textit{Epstein surface of $X$} if there exists a $\Gamma$-invariant conformal metric $\tilde{\sigma}$ on $\Omega$ and a diffeomorphism $\varphi : X \to S$ such that the diagram 
\[
\begin{tikzpicture}[scale=0.7]
\node (1) at (0,2) {$\Omega$};
\node (2) at (3,2) {$\H^3$};
\node (3) at (0,0) {$S$};
\node (4) at (3,0) {$M$};

\draw[->] (1) to node [above] {$\mathrm{Ep}_{\tilde{\sigma}}$} (2);
\draw[->] (1) to node [left] {$\varphi \circ \pi_X$} (3);
\draw[->] (3) to node [above] {$i$} (4);
\draw[->] (2) to node [right] {$\pi_M$} (4);

\end{tikzpicture}
\]
commutes. Here the $\pi_X$ and $\pi_M$ are the respective quotient maps $\Omega \to X$ and $\H^3 \to M$.
When $S$ is an Epstein surface of $X$, the conformal metric $\tilde{\sigma}$ induces a conformal metric $\sigma$ on $X$ that we call the \textit{conformal metric at infinity}. 
We have $\mathrm{Ep}_{\sigma} = \varphi \circ i$. 
The Epstein surface $S$ will then refer to the embedded image of $\mathrm{Ep}_{\sigma}: X \to M$.

\begin{defn}
\label{asym-def}
Let $S_\epsilon$ for $\epsilon$ in $(0,1)$ be a family of Epstein surfaces with conformal metrics at infinity $\sigma(\epsilon)$. 
We call this family \textit{asymptotically Poincar\'e} if 
\begin{enumerate}
    \item there exists a scaling function $f:[0,1) \to [0,\infty)$ so that the path 
    \[
f\sigma:(0,1) \to \mathrm{Met}^\infty(X)
\]
is differentiable and converges to the hyperbolic metric on $X$ as $\epsilon \to 0$, that is, $f(\epsilon)\sigma(\epsilon) \to h$ as $\epsilon \to 0$,
    \item the function $f$ is smooth and has simple zero at 0, and
    \item the continuous extension $\gamma:[0,1) \to \mathrm{Met}^\infty(X)$ of $f \sigma$ is differentiable.
\end{enumerate}

\end{defn}

As will be shown, as $\epsilon$ tends towards zero, the family of surfaces is leaving the end of the manifold.
The following lemma shows that the surfaces in an asymptotically Poincar\'e family are asymptotically parallel.

\begin{lem}\label{asym-parallel-lemma}
Suppose $S_\epsilon$ is an asymptotically Poincar\'e family of surfaces. Let $t > 0$. If $g^t$ is the geodesic flow operator defined above, then  
\[
d_M \left(g^t (\widehat{\mathrm{Ep}}_{\sigma(\epsilon)}(z)), \mathrm{Ep}_{\sigma(e^{-2t} \epsilon)}(z) \right) \to 0
\quad \text{ as } \epsilon \to 0
\]
uniformly in $z$. 
That is, the distance between the surface $S_\epsilon$ flowed for time $t$ and the surface $S_{e^{-2t}\epsilon}$ tends towards zero as $\epsilon$ does. 
\end{lem}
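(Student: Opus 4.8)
The plan is to use Thurston's flow identity (Lemma~\ref{epstein-flow}) to replace the left-hand side by a distance between two honest Epstein maps, and then to compare their $\mathrm{SL}(2,\C)$-frame fields from Proposition~\ref{dumas-def} head on.

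Since $g^t\circ\widehat{\mathrm{Ep}}_{\sigma(\epsilon)}=\mathrm{Ep}_{e^{2t}\sigma(\epsilon)}$ by Lemma~\ref{epstein-flow} (applied equivariantly, then pushed to $M$), the quantity to control is $d_M\big(\mathrm{Ep}_{e^{2t}\sigma(\epsilon)}(z),\mathrm{Ep}_{\sigma(e^{-2t}\epsilon)}(z)\big)$; note $e^{-2t}\epsilon\in(0,1)$ because $t>0$. As $\pi_M:\H^3\to M$ is a local isometry through which the $\Gamma$-equivariant Epstein maps of $\Omega$ descend to those of $X$, this is bounded by $d_{\H^3}\big(\mathrm{Ep}_{e^{2t}\tilde\sigma(\epsilon)}(\tilde z),\mathrm{Ep}_{\tilde\sigma(e^{-2t}\epsilon)}(\tilde z)\big)$ for a lift $\tilde z$, and this last expression is $\Gamma$-invariant in $\tilde z$; hence uniformity in $z\in X$ reduces to uniformity over the closure $\bar F$ of a fundamental domain for $\Gamma$ on $\Omega$, which (as $X$ is compact) may be taken compact, and which in a fixed affine chart with $0\in\Omega,\ \infty\notin\Omega$ is a compact subset of $\C$.

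Write $\rho_\epsilon=f(\epsilon)\sigma(\epsilon)$, with log density $\psi_\epsilon$ in this chart; by Definition~\ref{asym-def}(1), $\psi_\epsilon\to\beta$ in $C^1$ uniformly on $\bar F$, where $\beta$ is the log density of $h$. Then the log density of $\sigma(\epsilon)$ is $\eta_\epsilon=\psi_\epsilon-\tfrac12\log f(\epsilon)$ and that of $e^{2t}\sigma(\epsilon)$ is $\eta_\epsilon+t$. Because rescaling a metric by $e^{s}$ right-multiplies the frame of Proposition~\ref{dumas-def} by $\mathrm{diag}(e^{-s/2},e^{s/2})$ and fixes the other two factors, a short $2\times2$ computation (the translation factor cancels, and the two lower-triangular unipotent factors merge after conjugating through the diagonal factor) gives
\[
\widetilde{\mathrm{Ep}}_{e^{2t}\sigma(\epsilon)}(z)^{-1}\,\widetilde{\mathrm{Ep}}_{\sigma(e^{-2t}\epsilon)}(z)
=\begin{pmatrix}1&0\\ \ell_\epsilon(z)&1\end{pmatrix}\begin{pmatrix}e^{-c_\epsilon(z)/2}&0\\0&e^{c_\epsilon(z)/2}\end{pmatrix},
\]
where $c_\epsilon=\eta_{e^{-2t}\epsilon}-\eta_\epsilon-t$ and $\ell_\epsilon=e^{-(\eta_\epsilon+t)}\big((\eta_{e^{-2t}\epsilon})_z-(\eta_\epsilon)_z\big)$, all evaluated at $z$. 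I then check that both factors tend to the identity uniformly on $\bar F$. For the diagonal one, $c_\epsilon=(\psi_{e^{-2t}\epsilon}-\psi_\epsilon)-\tfrac12\log\tfrac{f(e^{-2t}\epsilon)}{f(\epsilon)}-t$; the first term goes to $0$ uniformly, while Definition~\ref{asym-def}(2) (smoothness of $f$ and a simple zero at $0$) gives $f(e^{-2t}\epsilon)/f(\epsilon)\to e^{-2t}$, so the remaining two terms cancel and $c_\epsilon\to0$ uniformly. For the unipotent one, $(\eta_{e^{-2t}\epsilon})_z-(\eta_\epsilon)_z=(\psi_{e^{-2t}\epsilon})_z-(\psi_\epsilon)_z\to0$ uniformly on $\bar F$, while $e^{-(\eta_\epsilon+t)}=f(\epsilon)^{1/2}e^{-(\psi_\epsilon+t)}\to0$ uniformly ($\psi_\epsilon$ being uniformly bounded on $\bar F$), so $\ell_\epsilon\to0$ uniformly. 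Thus $\widetilde{\mathrm{Ep}}_{e^{2t}\sigma(\epsilon)}(z)^{-1}\widetilde{\mathrm{Ep}}_{\sigma(e^{-2t}\epsilon)}(z)\to\mathrm{id}$ uniformly on $\bar F$, and since $d_{\H^3}(A\cdot p,B\cdot p)=d_{\H^3}(p,(A^{-1}B)\cdot p)$ depends continuously on $A^{-1}B$ and vanishes at $\mathrm{id}$, the distance in the lemma tends to $0$ uniformly in $z$.

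The main obstacle here is conceptual rather than computational: as $\epsilon\to0$ both surfaces leave every compact set of $M$ (the log densities $\eta_\epsilon\to+\infty$), so one cannot argue naively that ``$C^1$-close conformal metrics have close Epstein surfaces.'' What makes it work is that, once the common diverging conformal scale is divided out, the rescaled metrics $\rho_\epsilon$ and $\rho_{e^{-2t}\epsilon}$ share the $C^1$-limit $h$, and the simple-zero hypothesis on $f$ forces the two diverging scales $e^{2t}/f(\epsilon)$ and $1/f(e^{-2t}\epsilon)$ to agree to leading order --- which is exactly $c_\epsilon\to0$. Geometrically this is the statement that the two competing geodesic rays are asymptotic (they share the ideal endpoint $z$) and have matching Busemann phases along $z$ in the limit; the frame identity is just the bookkeeping that makes all of this uniform in $z$ at once. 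The only genuinely technical steps are verifying the displayed identity and the uniform decay $e^{-(\eta_\epsilon+t)}\to0$ on $\bar F$, and I expect nothing deeper than that.
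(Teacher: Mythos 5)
Your proposal is correct and follows essentially the same route as the paper: reduce via Lemma~\ref{epstein-flow} to comparing the Epstein maps for $e^{2t}\sigma(\epsilon)$ and $\sigma(e^{-2t}\epsilon)$ in the universal cover, and use the frame field of Proposition~\ref{dumas-def} together with the $C^1$-convergence of $f\sigma$ and the simple zero of $f$ to get uniform decay. The only cosmetic difference is that the paper evaluates the explicit hyperbolic distance (the $\mathrm{arctanh}$ formula), whereas you show the relative frame $N(\ell_\epsilon)D(c_\epsilon)$ tends to the identity uniformly and invoke continuity of $C\mapsto d_{\H^3}(p,C\cdot p)$ -- the same computation packaged slightly more softly.
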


\begin{proof}
We work with the universal covers. 
Lift $\sigma(\epsilon)$ to $\tilde{\sigma}(\epsilon)$ and $\gamma(\epsilon) = f(\epsilon)\sigma(\epsilon)$ to $\tilde{\gamma}(\epsilon)$ on $\Omega$. 
Write $\tilde{\sigma}(\epsilon) = e^{2\eta(\epsilon)}|dz|^2$ and $\tilde{\gamma}(\epsilon) = e^{2\lambda(\epsilon)}|dz|^2$, then $\eta(\epsilon) = \lambda(\epsilon) - (1/2)\ln(f(\epsilon))$. 
Recall from Lemma \ref{epstein-flow} that $g^t \circ \widehat{\mathrm{Ep}}_{\tilde{\sigma}(\epsilon)} = \mathrm{Ep}_{e^{2t}\tilde{\sigma}(\epsilon)}$.
For ease of notation, let $c = e^{-2t}$. 
Then a straightforward calculation using Proposition \ref{dumas-def} gives the distance between $\mathrm{Ep}_{c^{-1} \tilde{\sigma}(\epsilon)}$ and $\mathrm{Ep}_{\tilde{\sigma}(c\epsilon)}$ to be
\[
2 \mathrm{arctanh} \left(\sqrt{ 
\frac
{(1 - \sqrt{\frac{c f(\epsilon)}{f(c \epsilon)}}e^{\lambda(c\epsilon) - \lambda(\epsilon)})^2 + 4 c f(\epsilon)e^{-2\lambda(\epsilon)}|\lambda_z(c\epsilon) - \lambda_z(\epsilon)|^2}
{(1 + \sqrt{\frac{c f(\epsilon)}{f(c \epsilon)}}e^{\lambda(c\epsilon) - \lambda(\epsilon)})^2 + 4 c f(\epsilon)e^{-2\lambda(\epsilon)}|\lambda_z(c\epsilon) - \lambda_z(\epsilon)|^2}
} \right).
\] 
Since $\tilde{\gamma}(\epsilon)$ converges in $\mathrm{Met}^\infty(X)$, the function $\lambda$ has a $C^2$ limit as $\epsilon \to 0$. 
Therefore, the argument of $\mathrm{arctanh}$ converges to zero uniformly in $z$.
Thus the same holds in the quotient $M$ as well.
\end{proof}

We have required that an asymptotically Poincar\'e family consist of embedded surfaces.
The next proposition gives a useful condition for a family of conformal metrics to give rise to an asymptotically Poincar\'e family of surfaces.

\begin{prop}
\label{asym-family-prop}
Let $\sigma: (0,1) \to \mathrm{Conf}^\infty(X)$ be a family of conformal metrics on $X$. 
Suppose there exists a smooth function $f:[0,1) \to [0,\infty)$ with simple zero at $0$, such that $f\sigma \to h$ as $\epsilon \to 0$ and such that the extension $\gamma: [0,1) \to \mathrm{Conf}^\infty(X)$ is differentiable.
Then there exists an $\epsilon_0 >0$ so that for $\epsilon < \epsilon_0$, the Epstein map $\mathrm{Ep}_{\sigma(\epsilon)}$ is an embedding. 
Hence, the Epstein surfaces $\mathrm{Ep}_{\sigma(\epsilon)}:X \to M$, for $\epsilon < \epsilon_0$, form an asymptotically Poincar\'e family. 
\end{prop}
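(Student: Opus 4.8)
The plan is to reduce the proposition to a statement about convex immersions. Conditions (1)--(3) of Definition \ref{asym-def} are literally among the hypotheses, and an Epstein surface of $X$ in the sense of that section is produced the moment the Epstein map is an embedding (the surface is then closed, has the genus of $X$, and the defining diagram commutes by construction). So it suffices to find $\epsilon_0>0$ with $\mathrm{Ep}_{\sigma(\epsilon)}:X\to M$ an embedding for all $\epsilon<\epsilon_0$. Since $f\ge 0$ has a simple zero at $0$, it is positive on some interval $(0,\epsilon_0]$ with $\lambda_\epsilon:=1/f(\epsilon)\to+\infty$ as $\epsilon\to0$, and $\sigma(\epsilon)=\lambda_\epsilon\,\gamma(\epsilon)$ with $\gamma(\epsilon)\to h$ in $\mathrm{Conf}^\infty(X)$. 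Hence $\mathrm{Ep}_{\sigma(\epsilon)}=\mathrm{Ep}_{\lambda_\epsilon\gamma(\epsilon)}$, and everything reduces to understanding Epstein surfaces of large constant multiples of metrics near $h$.

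First I would check that $\mathrm{Ep}_{\sigma(\epsilon)}$ is a strictly convex immersion once $\epsilon$ is small. The point is how the Epstein data behaves under a constant rescaling $\gamma\mapsto\lambda\gamma$: the curvature scales as $K(\lambda\gamma)=\lambda^{-1}K(\gamma)$, the Schwarzian $B(g_{\CP^1},\lambda\gamma)=B(g_{\CP^1},\gamma)$ is unchanged, and $\tfrac{4}{\lambda\gamma}|B(g_{\CP^1},\gamma)|^{2}=\lambda^{-1}\,\tfrac{4}{\gamma}|B(g_{\CP^1},\gamma)|^{2}$. Substituting these into the formulas for $I(\sigma)$ and $I\!I(\sigma)$, dividing by $\lambda$, and letting $\lambda\to\infty$, all Schwarzian and curvature contributions vanish in the limit and one is left with
\[
\frac{1}{\lambda}\,I(\lambda\gamma)\;\longrightarrow\;\frac{1}{4}\gamma
\qquad\text{and}\qquad
\frac{1}{\lambda}\,I\!I(\lambda\gamma)\;\longrightarrow\;-\frac{1}{4}\gamma .
\]
Because $\gamma$, $K(\gamma)$, and $B(\gamma)$ depend continuously on $\epsilon$ and $X\times[0,\epsilon_0]$ is compact, these limits hold uniformly. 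Applying them with $\gamma=\gamma(\epsilon)$ and $\lambda=\lambda_\epsilon$, and shrinking $\epsilon_0$ so that $\lambda_\epsilon$ exceeds the resulting threshold, we get that for $\epsilon<\epsilon_0$ the tensor $I(\sigma(\epsilon))$ is positive definite and $-I\!I(\sigma(\epsilon))$ is positive definite; that is, $\mathrm{Ep}_{\sigma(\epsilon)}$ is an immersion whose second fundamental form, relative to the normal field $\widehat{\mathrm{Ep}}_{\sigma(\epsilon)}$, is everywhere negative definite (matching the sign convention recorded after Theorem \ref{big-thm-intro}).

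Finally I would promote local convexity to an embedding. Lifting to the universal cover, $\mathrm{Ep}_{\tilde{\sigma}(\epsilon)}:\Omega\to\H^3$ is an immersion whose induced metric descends to the compact surface $(X,I(\sigma(\epsilon)))$ and is therefore complete, and whose second fundamental form is definite with respect to the normal field. By a classical convexity theorem — the negatively curved analogue of the Hadamard--van Heijenoort theorem, namely that a complete, connected, locally convex immersed hypersurface in a Hadamard manifold is a proper embedding onto the boundary of a convex body (see e.g.\ the work of S.~Alexander) — the map $\mathrm{Ep}_{\tilde{\sigma}(\epsilon)}$ is injective. By naturality of the Epstein construction, $\mathrm{Ep}_{\tilde{\sigma}(\epsilon)}(\gamma z)=\gamma\cdot\mathrm{Ep}_{\tilde{\sigma}(\epsilon)}(z)$ for $\gamma\in\Gamma$, so this injective $\Gamma$-equivariant map descends to an injective map $\mathrm{Ep}_{\sigma(\epsilon)}:X\to M$, which, being a proper immersion of a closed surface, is an embedding. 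The conditions of Definition \ref{asym-def} then hold, which completes the proof. I expect the last step to be the main obstacle: local convexity alone does not force an immersed surface to be embedded, so one must genuinely verify the hypotheses of a global convexity theorem — completeness of the induced metric, which comes from cocompactness, and strict definiteness of $I\!I$ at every point, which comes from the uniform rescaling estimate above. A more hands-on alternative is to compare $\mathrm{Ep}_{\sigma(\epsilon)}$ directly with the Poincar\'e surface $\mathrm{Ep}_{e^{2t(\epsilon)}h}$, $t(\epsilon)=-\tfrac{1}{2}\ln f(\epsilon)$, using that the Poincar\'e family foliates the end; but that requires controlling the perturbation uniformly under the large-time geodesic flow $g^{t(\epsilon)}$, which is delicate.
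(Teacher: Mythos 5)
Your proposal is correct in outline, but it takes a genuinely different route from the paper. The paper's proof never mentions convexity: it writes the family map $\mathrm{Ep}_{\tilde\sigma}(z,\epsilon)$ explicitly in the upper half-space model, observes that it extends continuously to $\epsilon=0$ by the identity on $\Omega\subset\partial_\infty\H^3$, reparametrizes by $\epsilon\mapsto\epsilon^2$ to absorb the $\sqrt{f(\epsilon)}$ and gain $C^1$ regularity up to the boundary of $\bar M = M\sqcup X$, and then applies the inverse function theorem at the boundary to get a collar diffeomorphism $X\times[0,\delta)\to\bar M$; embeddedness of each $\mathrm{Ep}_{\sigma(\epsilon)}$ falls out, and so does Corollary \ref{foliation} (the foliation of the end), which is stated as an immediate consequence of that same collar map. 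Your argument instead establishes, via the scaling behavior of $K$ and $B$ under $\gamma\mapsto\lambda\gamma$, that $\tfrac{1}{\lambda}I(\lambda\gamma)\to\tfrac14\gamma$ and $\tfrac{1}{\lambda}I\!I(\lambda\gamma)\to-\tfrac14\gamma$ uniformly (which is consistent with, and essentially anticipates, Proposition \ref{thm-in-sobolev}), hence uniform strict convexity for small $\epsilon$, and then outsources the global step to the Hadamard--van Heijenoort--Alexander theorem for complete locally convex hypersurfaces in Hadamard manifolds, descending by $\Gamma$-equivariance and compactness of $X$. The equivariance/descent and ``injective immersion of a compact surface is an embedding'' steps are fine, and completeness does follow from cocompactness, so I see no gap beyond the reliance on a substantial external theorem whose hypotheses you correctly verify. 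What you lose relative to the paper is self-containedness (the paper needs only the explicit frame-field formula and the inverse function theorem) and, more importantly, the collar structure: your argument shows each surface is embedded but does not by itself show the family foliates a neighborhood of the end, so Corollary \ref{foliation} --- which the paper later uses (e.g.\ to identify the CMC family with the Mazzeo--Pacard foliation) --- would require a separate argument, perhaps along the lines of the ``hands-on'' comparison with the Poincar\'e family that you sketch but do not carry out.
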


\begin{proof} 
Let $\tilde{\sigma}:(0,1) \to \mathrm{Conf}^\infty(\Omega)$ be the lift of the family $\sigma$. 
Define the Epstein family map $\mathrm{Ep}_{\tilde{\sigma}}: \Omega \times (0,1) \to \H^3$ by $\mathrm{Ep}_{\tilde{\sigma}}(z,\epsilon) = \mathrm{Ep}_{\tilde{\sigma}(\epsilon)}(z)$. 
It follows from the $\mathrm{SL}(2,\C)$-frame definition of the Epstein map that in the upper half space model $\H^3 \cong \C \times \R^+$, the family map is given by 
\[
\mathrm{Ep}_{\tilde{\sigma}}(z,\epsilon) = (z,0) + \frac{2}{e^{2\eta} + 4 |\eta_z|^2}\left(2 \eta_{\bar{z}}, e^\eta \right).
\]
Writing $\tilde{\sigma}(\epsilon) = e^{2 \eta(z,\epsilon)}|dz|^2$ and $\tilde{\gamma}(\epsilon) = e^{2 \lambda(z,\epsilon)}|dz|^2$, the condition $\tilde{\gamma}(\epsilon) = f(\epsilon)\tilde{\sigma}(\epsilon)$ becomes $\eta(z,\epsilon) = \lambda(z,\epsilon) - (1/2) \ln(f(\epsilon))$.
Note that $\lambda(z,\epsilon) \to \rho(z)$ as $\epsilon \to 0$, uniformly in $z$, where $\rho$ is the log density of the Poincar\'e metric of $\Omega$. 
Hence we can rewrite $\mathrm{Ep}_{\tilde{\sigma}}$ as 
\[
\mathrm{Ep}_{\tilde{\sigma}}(z,\epsilon) = (z,0)  + \frac{2}{e^{2\lambda} + 4 f(\epsilon) |\lambda_z|^2} \left( 2 f(\epsilon) \lambda_{\bar{z}},  \sqrt{f(\epsilon)}e^{\lambda} \right)
\]
and see that
\[
\lim_{\epsilon \to 0} \mathrm{Ep}_{\tilde{\sigma}} (z,\epsilon) = (z,0).
\]
So we may extend $\mathrm{Ep}_{\tilde{\sigma}}$ to a map $\Omega \times [0,1) \to \H^3 \sqcup \Omega$, which is the identity on the boundary $\Omega \times \{0\} \to \Omega$. 

While this map is not differentiable at $\epsilon = 0$ (due to the $\sqrt{f(\epsilon)}$), the map $F: \Omega \times [0,1) \to \H^3 \sqcup \Omega$ given by $F(z,\epsilon) = \mathrm{Ep}_{\tilde{\sigma}}(z,\epsilon^2)$ satisfies $F(z,0) = (z,0)$, is differentiable at $\epsilon = 0$, and has derivative  
\[
d F_{(z,0)} = 
\begin{pmatrix}
1 & 0 & 0 \\
0 & 1 & 0 \\
0 & 0 & 2 \sqrt{f'(0)} e^{-\rho(z)}
\end{pmatrix}.
\]
Since this is invertible, $F$ is a local $C^1$-diffeomorphism at the boundary of $\Omega \times [0,1) \to \H^3 \sqcup \Omega$.

Define $\bar{M} = ( \H^3 \sqcup \Omega ) /\Gamma = M \sqcup X$.
Then $\bar{M}$ is a smooth manifold with compact boundary $X$. 
By $\Gamma$-equivariance of $\mathrm{Ep}_{\tilde{\sigma}}$, $F$ descends to a map $X \times[0,1) \to \bar{M}$ that is the identity on the boundary $\partial(X \times [0,1)) \to \partial{\bar{M}} = X$ and that is a local diffeomorphism there. 
Therefore, the restriction of $F$ to $X \times [0, \delta)$, for some small enough $\delta$, is a diffeomorphism onto a collar neighborhood of $\partial \bar{M} = X$.

Unraveling, we get that $\mathrm{Ep}_{\sigma}$ is a diffeomorphism from a collar neighborhood $X \times (0,\sqrt{\delta})$ to a neighborhood of infinity of $M$.
In particular, each Epstein map $\mathrm{Ep}_\sigma(\cdot, \epsilon) = \mathrm{Ep}_{\sigma(\epsilon)}$, for $\epsilon < \sqrt{\delta}$, is an immersion and injective with compact domain $X$. 
Hence each Epstein surface, for $\epsilon < \sqrt{\delta}$, is embedded.
To complete the proof take $\epsilon_0 = \sqrt{\delta}$.
\end{proof}

In the preceeding proof, we have the map $F: X \times [0,\delta) \to \bar{M}$ is a diffeomorphism onto its image. Hence we have the following result.

\begin{cor}
\label{foliation}
If $S_\epsilon$ is an asymptotically Poincar\'e family of surfaces, then there exists an $\epsilon_0 > 0$ such that for $\epsilon < \epsilon_0$ the surfaces $S_\epsilon$ form a foliation of the end of $M$ whose surface at infinity is $X$.
\qed
\end{cor}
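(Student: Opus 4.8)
The plan is to deduce Corollary~\ref{foliation} directly from the statement embedded in the proof of Proposition~\ref{asym-family-prop}, namely that the map $F$ (equivalently $\mathrm{Ep}_\sigma$ after the reparametrization $\epsilon \mapsto \epsilon^2$) restricts to a diffeomorphism from a collar $X \times [0,\delta)$ onto a collar neighborhood of $\partial \bar M = X$ in $\bar M = M \sqcup X$. First I would recall that for an arbitrary asymptotically Poincar\'e family, Definition~\ref{asym-def} supplies exactly the hypotheses needed to run Proposition~\ref{asym-family-prop}: a smooth scaling function $f$ with a simple zero at $0$, the convergence $f\sigma \to h$, and the differentiability of the continuous extension $\gamma$. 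So the proposition applies verbatim and yields an $\epsilon_0 = \sqrt\delta$ together with the fact that $\mathrm{Ep}_\sigma \colon X \times (0,\epsilon_0) \to M$ is a diffeomorphism onto a neighborhood of infinity.

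Next I would spell out why a diffeomorphism of this form produces a foliation. Writing $U = \mathrm{Ep}_\sigma(X \times (0,\epsilon_0))$ for the image, the map $\mathrm{Ep}_\sigma$ is a chart identifying $U$ with $X \times (0,\epsilon_0)$, and under this identification the surfaces $S_\epsilon = \mathrm{Ep}_{\sigma(\epsilon)}(X)$ are exactly the slices $X \times \{\epsilon\}$. These slices are the leaves of the trivial product foliation of $X \times (0,\epsilon_0)$, and transporting that foliation through the diffeomorphism gives a smooth codimension-one foliation of $U$ whose leaves are precisely the $S_\epsilon$, $\epsilon < \epsilon_0$. Finally I would note that $U$ is a neighborhood of the end of $M$ with surface at infinity $X$: indeed $\bar M = M \sqcup X$ and $F$ carries $X \times [0,\delta)$ onto a collar of $X$ in $\bar M$, so $U = F(X \times (0,\delta))$ together with $X$ forms a half-open collar of that boundary component, which is by definition a neighborhood of the corresponding end.

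The only point requiring any care is bookkeeping between the two parametrizations: Proposition~\ref{asym-family-prop} is phrased for a family $\sigma$ indexed directly by $\epsilon$, whereas an asymptotically Poincar\'e family is given abstractly with its own parameter; but since Definition~\ref{asym-def} and the hypotheses of the proposition coincide, and since reindexing $\epsilon \mapsto \epsilon^2$ is a diffeomorphism of $(0,1)$ fixing the foliation (it only relabels the leaves), this is purely cosmetic. I do not expect any genuine obstacle here — the corollary is an immediate unpacking of what the proof of Proposition~\ref{asym-family-prop} already established, which is why the excerpt marks it with \texttt{\textbackslash qed} and no proof. The proposed write-up is therefore just the observation that ``$F$ is a diffeomorphism of a collar onto a collar, hence its level sets foliate.''

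\begin{proof}
By Definition~\ref{asym-def}, the conformal metrics at infinity $\sigma(\epsilon)$ of an asymptotically Poincar\'e family satisfy the hypotheses of Proposition~\ref{asym-family-prop}. Hence there is an $\epsilon_0 > 0$ and, in the notation of that proof, a map $F$ whose unraveling shows that $\mathrm{Ep}_{\sigma} \colon X \times (0, \epsilon_0) \to M$ is a diffeomorphism onto a neighborhood $U$ of infinity of $M$, extending to a diffeomorphism of $X \times [0,\epsilon_0)$ onto a collar of $\partial \bar{M} = X$ in $\bar{M} = M \sqcup X$. Under this identification the surface $S_\epsilon = \mathrm{Ep}_{\sigma(\epsilon)}(X)$ corresponds to the slice $X \times \{\epsilon\}$. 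The slices $X \times \{\epsilon\}$, for $\epsilon \in (0,\epsilon_0)$, are the leaves of the product foliation of $X \times (0,\epsilon_0)$; transporting this foliation through $\mathrm{Ep}_\sigma$ gives a smooth foliation of $U$ whose leaves are exactly the $S_\epsilon$. Since $U$ together with $X$ is a half-open collar of the boundary component $X$ of $\bar M$, it is a neighborhood of the end of $M$ with surface at infinity $X$.
\end{proof}
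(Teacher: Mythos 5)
Your proposal is correct and follows essentially the same route as the paper: the paper derives Corollary~\ref{foliation} immediately from the observation in the proof of Proposition~\ref{asym-family-prop} that $F$ (equivalently $\mathrm{Ep}_\sigma$ after reparametrization) is a diffeomorphism from a collar $X \times [0,\delta)$ onto a collar neighborhood of $\partial\bar{M} = X$, so the slices $X \times \{\epsilon\}$ transport to a foliation by the surfaces $S_\epsilon$. Your write-up simply makes this unpacking explicit, which is exactly why the paper states the corollary with no separate proof.
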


We now turn to our main result, but first note that the co-orientation on an Epstein surface we are using is that induced by the lift $\widehat{\mathrm{Ep}}_\sigma$, which points towards the surface at infinity $X$.
This implies that $I\!I(\sigma)$ is negative definite (for small enough $\epsilon$), and so $-I\!I(\sigma)$ is a smooth Riemannian metric. 

In our setting it is natural to work with the Riemannian model of Teichm\"uller space. 
That is, we use 
\[
\mathcal{T}(X) = \mathrm{Met}^\infty(X)/  \mathrm{Diff}_0^\infty(X) \rtimes P^\infty(X),
\]
where $P^\infty(X)$ is the set of smooth positive functions on $X$ and $\mathrm{Diff}_0^\infty(X)$ is the group of smooth diffeomorphisms isotopic to the identity. 
See \cite{tromba1992} for details. 
The smooth topology, however, is difficult to work with directly. 
So, we first work in the Sobolev setting $\mathrm{Met}^s(X)$ and $H^s(X)$ of Sobolev tensors and functions of a fixed regularity $s > 3$.
Since $K$ and $B$ are smooth functions of $\sigma$ and its derivatives we have that they both extend to functions on Sobolev classes of metrics.
Hence if $\sigma \in \mathrm{Conf}^s(X)$ then $I(\sigma)$ and $-I\!I(\sigma)$ belongs to $\mathrm{Met}^{s-2}(X)$.
We will obtain results with these extensions and then argue our results are independent of the chosen $s$.

\begin{prop}
\label{thm-in-sobolev}
Suppose $S_\epsilon$ is an asymptotically Poincar\'e family of surfaces. Let $\gamma : [0,1) \to \mathrm{Met}^s(X)$ be the extension of $f\sigma$ thought of as taking values in a class of Sobolev metrics for a fixed $s > 3$. Then the first and second fundamental forms $I \circ \sigma: (0,1) \to \mathrm{Met}^{s-2}(X)$ and  $I\!I \circ \sigma: (0,1) \to \mathrm{Met}^{s-2}(X)$ satisfy
\[
I_\epsilon = 4 f'(0) \epsilon I(\sigma(\epsilon)) \to h
\quad \text{ and } \quad
I\!I_\epsilon =  - 4 f'(0) \epsilon I\!I(\sigma(\epsilon)) \to h
\quad \text{ as } \epsilon \to 0.
\]
Moreover, $I_\epsilon$ and $I\!I_\epsilon$ are differentiable at $\epsilon = 0$ and their tangent vectors are given by 
\[
\dot{I_\epsilon} = \dot{\gamma} + 2 f'(0) h + 4 f'(0) \mathrm{Re}(\phi)
\quad \text{ and } \quad
\dot{I\!I_\epsilon} = \dot{\gamma}
\]
\end{prop}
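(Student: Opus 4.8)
The plan is to reduce the statement to the expressions for $I(\sigma)$ and $I\!I(\sigma)$ recalled above, by substituting $\sigma(\epsilon)=\gamma(\epsilon)/f(\epsilon)$ and tracking how the curvature and the Schwarzian scale. Since $f(\epsilon)$ is a positive constant on $X$, the scaling rules $K(c\sigma)=c^{-1}K(\sigma)$, $B(c\sigma)=B(\sigma)$, and $\tfrac{1}{c\sigma}|B(c\sigma)|^2=c^{-1}\tfrac{1}{\sigma}|B(\sigma)|^2$ give $K(\sigma(\epsilon))=f(\epsilon)K(\gamma(\epsilon))$ and $B(\sigma(\epsilon))=B(\gamma(\epsilon))$. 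Inserting these into the formulas for $I(\sigma)$ and $I\!I(\sigma)$ and multiplying by $4f'(0)\epsilon$ and $-4f'(0)\epsilon$ respectively rewrites $I_\epsilon$ and $I\!I_\epsilon$ each as a sum of three explicit terms in $f(\epsilon)$, $\epsilon$, $\gamma(\epsilon)$, $K(\gamma(\epsilon))$, and $B(\gamma(\epsilon))$: in each sum two of the terms carry a factor $\epsilon\,f(\epsilon)$ — one of which, for $I_\epsilon$, is instead a lone factor $\epsilon$ multiplying $(1-f(\epsilon)K(\gamma(\epsilon)))\,\mathrm{Re}(B(\gamma(\epsilon)))$ — while the remaining term carries the factor $\epsilon/f(\epsilon)$, which is the only one not manifestly regular at $\epsilon=0$.

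Next I would record the regularity needed. Because $K$ and $B$ are polynomial in the log density and its first two derivatives (together with $e^{\pm 2\eta}$), they descend to smooth maps on $\mathrm{Met}^s(X)$ valued in Sobolev tensors of class $s-2$, so by Definition~\ref{asym-def}(3) and the chain rule the maps $\epsilon\mapsto K(\gamma(\epsilon))$ and $\epsilon\mapsto B(\gamma(\epsilon))$ are differentiable on $[0,1)$, with $K(\gamma(0))=K(h)=-1$ and $B(\gamma(0))=B(h)$. Since $f$ has a simple zero at $0$, the quotient $\epsilon/f(\epsilon)$ extends smoothly over $0$ with value $1/f'(0)$. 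Convergence is then immediate: as $\epsilon\to0$ the term carrying $\epsilon/f(\epsilon)$ tends to $h$ (using $\gamma(\epsilon)\to h$ and $K(\gamma(\epsilon))\to-1$), and the terms carrying $\epsilon\,f(\epsilon)$ or $\epsilon$ vanish, so $I_\epsilon\to h$ and $I\!I_\epsilon\to h$.

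For the tangent vectors I would Taylor-expand each summand at $\epsilon=0$. The $\epsilon\,f(\epsilon)$-terms have vanishing derivative there and drop out; the lone-$\epsilon$ term in $I_\epsilon$ contributes $8f'(0)\,\mathrm{Re}(B(h))$; and the $\epsilon/f(\epsilon)$-term contributes $\dot{\gamma}$ together with a multiple of $h$, produced by the derivative of $\epsilon/f(\epsilon)$ at $0$ and by the derivative of $(1-f(\epsilon)K(\gamma(\epsilon)))^2$ at $0$, which equals $2f'(0)$ since $K(h)=-1$. The remaining ingredient is the identification $B(h)=\tfrac{1}{2}\phi$, which follows because the lift $\tilde h$ of $h$ is the hyperbolic metric of $\Omega$, the hyperbolic metric $g_{\H}$ of $\H$ is M\"obius flat, and $\mathcal{S}(F)=2B(|dz|^2,F^*|dz|^2)$ for a Riemann map $F:\Omega\to\H$: the cocycle property for Schwarzian derivatives of conformal metrics then gives $B(g_{\CP^1},\tilde h)=\tfrac{1}{2}\mathcal{S}(F)=\tfrac{1}{2}\tilde\phi$, hence $B(h)=\tfrac{1}{2}\phi$ after descent to $X$. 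Substituting $\mathrm{Re}(B(h))=\tfrac{1}{2}\mathrm{Re}(\phi)$ and collecting terms then gives $\dot{I_\epsilon}=\dot{\gamma}+2f'(0)h+4f'(0)\,\mathrm{Re}(\phi)$ and $\dot{I\!I_\epsilon}=\dot{\gamma}$; the reason $\mathrm{Re}(\phi)$ does not enter $\dot{I\!I_\epsilon}$ is that in $I\!I(\sigma)$ the $\mathrm{Re}(B)$-term carries the extra factor $K(\sigma)=f(\epsilon)K(\gamma(\epsilon))$, which creates a double zero after multiplication by $\epsilon$.

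The substantive difficulty is essentially all bookkeeping: arranging the substitution so that the scalings of $K$, $B$, and $\tfrac{1}{\sigma}|B|^2$ are transparent, and then keeping careful track of which pieces of each term survive differentiation at $\epsilon=0$, with $\epsilon/f(\epsilon)$ the delicate factor since both its value and its derivative at $0$ enter the answer. The one genuinely conceptual point, as opposed to computation, is the identification of $B(h)$ with a multiple of the holomorphic quadratic differential at infinity, which is what forces $\mathrm{Re}(\phi)$ into $\dot{I_\epsilon}$; beyond that, the argument is a direct Taylor expansion together with the standard smoothness of $K$ and $B$ on Sobolev metrics used to differentiate under the compositions.
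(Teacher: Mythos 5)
Your proposal follows essentially the same route as the paper's proof: substitute $\sigma(\epsilon)=\gamma(\epsilon)/f(\epsilon)$ into the Epstein formulas for $I$ and $I\!I$, use the constant-scaling rules $K(c\sigma)=c^{-1}K(\sigma)$ and $B(c\sigma)=B(\sigma)$, differentiate via the smoothness of $K$ and $B$ on Sobolev metrics, Taylor-expand at $\epsilon=0$, and use $B(h)=\tfrac12\phi$ (which the paper invokes implicitly through $B(\gamma(\epsilon))=\tfrac12\phi+O(\epsilon)$ and which you justify correctly via the Riemann map and M\"obius flatness of the hyperbolic metric of $\H$). In substance this is the paper's argument, with the $B(h)$ identification spelled out.

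One caveat, which your own bookkeeping flags but your final collection of terms drops: you correctly say that both the value and the derivative of $\epsilon/f(\epsilon)$ at $0$ enter, yet your stated answer uses only the value. Since $(\epsilon/f(\epsilon))'(0)=-f''(0)/(2f'(0)^2)$, carrying this through adds $-\tfrac{f''(0)}{2f'(0)}\,h$ to both $\dot{I_\epsilon}$ and $\dot{I\!I_\epsilon}$; as a check, for the Fuchsian $k$-surface family one computes $I_\epsilon\equiv h$ exactly, so $\dot{I_\epsilon}=0$ rather than $2f'(0)h$, and the discrepancy is exactly $-\tfrac{f''(0)}{2f'(0)}h$ with $f'(0)=f''(0)=\tfrac14$. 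The paper's simplification makes the same omission (its $O(\epsilon)$ bookkeeping discards an $O(1)$ multiple of $h$ coming from $\gamma/(4f)$), and it is harmless for Theorem \ref{main-result} because the extra term is pure trace and is killed by the projection to Teichm\"uller space; but if you want the displayed formulas of Proposition \ref{thm-in-sobolev} literally, you should either record this additional pure-trace term or note that your formulas hold modulo multiples of $h$ (equivalently, exactly when $f''(0)=0$).
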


\begin{proof}
We have that $\gamma:[0,1) \to \mathrm{Met}^\infty(X)$ is continuous and differentiable at $\epsilon = 0$. Therefore, as $\mathrm{Met}^\infty(X) = \cap_{s > 3} \mathrm{Met}^s(X)$ we also have that $\gamma$ is continuous to $\mathrm{Met}^s$ and differentiable at $\epsilon = 0$. 

Then, we have $I(\sigma(\epsilon)) = I(\frac{1}{f(\epsilon)} \gamma(\epsilon))$ is equal to 
\[
4 f(\epsilon) \frac{|B(\gamma(\epsilon))|^2}{\gamma(\epsilon)} + \frac{1}{4 f(\epsilon)}(1 - f(\epsilon) K(\gamma(\epsilon)))^2 \gamma(\epsilon) + 2(1 - f(\epsilon)K(\gamma(\epsilon)))\mathrm{Re}(B(\gamma(\epsilon))),
\]
which is a smooth tensor independent of $s$. Since $f:[0,1) \to \R$ and $\gamma:[0,1) \to \mathrm{Met}^s(X)$ are differentiable at 0 and since $K: \mathrm{Met}^s(X) \to H^{s-2}(X)$ and $B: \mathrm{Conf}^s(X) \to \Gamma^{s-2}(\Sigma^{2}(X))$ are differentiable at the hyperbolic metric $h$, we can write 
\begin{align*}
f(\epsilon) = \epsilon f'(0) + O(\epsilon^2), \quad
K(\gamma(\epsilon)) = -1 + O(\epsilon), \quad 
B(\gamma(\epsilon)) = \frac{1}{2} \phi + O(\epsilon).
\end{align*}

Substitution and some simplification gives 
\[
I(\sigma(\epsilon)) = \frac{1}{4\epsilon f'(0)} h + \frac{1}{4 f'(0)} \dot{\gamma} + \frac{1}{2} h + \mathrm{Re}(\phi) + O(\epsilon).
\]
Consequently, 
\[
I_\epsilon =  h + \epsilon(\dot{\gamma} + 2 f'(0)h + 4 f'(0) \mathrm{Re}(\phi)) + O(\epsilon^2).
\]
The same reasoning will give $I\!I_\epsilon = h + \epsilon \dot{\gamma} + O(\epsilon^2)$. The result then follows.
\end{proof}

The space $\mathrm{Met}^s(X)$ is an open subset of the vector space of symmetric 2-tensors $\Gamma^s(\Sigma^2(X))$. So, we may canonically identify each tangent space to $\mathrm{Met}^s(X)$ with this space. For each $g \in \mathrm{Met}^s(X)$, there is an $L^2$ orthogonal decomposition of the tangent space at $g$ as
\[
T_g \mathrm{Met}^s(X) = \{\dot{g} \ | \ \mathrm{tr}_g (\dot{g}) = 0 = \delta_g(\dot{g}) \} \oplus \{\mathcal{L}_X g + fg \ |\ f \in H^s \text{ and } X \in \Gamma(TX) \},
\] 
where $\delta_g(\dot{g})$ is the divergence of the 1-1 tensor $g^{-1}\dot{g}$. The second summand is tangent to the $\mathrm{Diff}_0^s(X) \rtimes P^s(X)$ orbit of $g$. The first summand is referred to as transverse traceless tensors and consists of elements that are smooth, trace-free, and divergence-free (see \cite{fischer-marsden1975} for more details). Since the transverse traceless tensors are orthogonal to the group orbit they may be identified with the tangent space $T_{[g]} \left( \mathrm{Met}^s(X)/ \mathrm{Diff}_0^s(X) \rtimes P^s(X) \right)$ to the quotient at $[g]$. Moreover transverse traceless tensors are a set of smooth tensors that are $L^2$ orthogonal to the $\mathrm{Diff}_0^s(X) \rtimes P^s(X)$ orbit \textit{for any} $s$. Consequently, they may be identified with the tangent space to the quotient $\mathrm{Met}^\infty(X)/(\mathrm{Diff}_0^\infty(X) \rtimes P^\infty(X))$. That is, they may be naturally identified with the tangent space to Teichm\"uller space at $[g]$:
\[
T_{[g]} \mathcal{T}(X) = \{\dot{g} \ | \ \mathrm{tr}_g (\dot{g}) = 0 = \delta_g(\dot{g}) \}.
\]
Under this identification, the action of the derivative of the projection $\pi: \mathrm{Met}^\infty(X) \to \mathcal{T}(X)$ at $g$ is given by orthogonal projection onto the transverse traceless tensors $T_g \mathrm{Met}^\infty(X) \to \{\dot{g} \ | \ \mathrm{tr}_g (\dot{g}) = 0 = \delta_g(\dot{g}) \}.$

We make one more comment before we  prove our main Theorem. 
\begin{lem}[See \cite{tromba1992}]
Let $X$ be a Riemann surface and $h$ the corresponding hyperbolic metric. If $\phi$ is a holomorphic quadratic differential, then $\mathrm{Re}(\phi)$ is a smooth $h$-transverse-traceless tensor. 
\end{lem}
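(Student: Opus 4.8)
The plan is to verify directly the three defining properties of an $h$-transverse-traceless tensor — that $\mathrm{Re}(\phi)$ is smooth, is trace-free with respect to $h$, and is divergence-free with respect to $h$ — working in an isothermal coordinate, where the structure of a holomorphic quadratic differential is most transparent. All three verifications are local.

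First I would fix a conformal coordinate $z$ in which $h = e^{2\eta}|dz|^2$ and $\phi = \phi(z)\,dz^2$ with $\phi(z)$ holomorphic, so that
\[
\mathrm{Re}(\phi) = \tfrac12\big(\phi(z)\,dz^2 + \overline{\phi(z)}\,d\bar z^2\big),
\]
i.e.\ the only nonzero complex components are $q_{zz} = \tfrac12\phi(z)$, $q_{\bar z\bar z} = \overline{q_{zz}}$, while $q_{z\bar z} = 0$. Since $\phi(z)$ is holomorphic it is $C^\infty$ (indeed real-analytic), and since transition functions between conformal charts are holomorphic, taking the real part is compatible with the transformation rule for quadratic differentials; hence $\mathrm{Re}(\phi)$ is a globally well-defined smooth symmetric $2$-tensor on $X$, which settles smoothness.

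For the trace, $\mathrm{tr}_h \mathrm{Re}(\phi)$ is a nonzero multiple of $h^{z\bar z}q_{z\bar z} = 0$, so $\mathrm{Re}(\phi)$ is trace-free. The one computation with genuine content is the divergence. Using that the only nonzero Christoffel symbols of $h = e^{2\eta}|dz|^2$ are $\Gamma^z_{zz} = 2\eta_z$ and its conjugate, and that $q_{z\bar z} = 0$, the $\bar z$-component of $\delta_h \mathrm{Re}(\phi)$ reduces to a constant multiple of $\nabla_z q_{\bar z\bar z} = \partial_z q_{\bar z\bar z} = \overline{\partial_{\bar z}\big(\tfrac12\phi(z)\big)}$, which vanishes precisely because $\phi$ is holomorphic; the $z$-component is its conjugate and likewise vanishes. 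Hence $\delta_h \mathrm{Re}(\phi) = 0$, and $\mathrm{Re}(\phi)$ is transverse-traceless. The only real obstacle is bookkeeping — keeping the factors of $2$ and the index/conjugation conventions in the complex-coordinate form of $\delta_h$ straight so that the cancellation against $\partial_{\bar z}\phi = 0$ is transparent — which is routine; alternatively one may simply invoke the standard identification (see \cite{tromba1992}) of holomorphic quadratic differentials with the kernel of $\delta_h$ on trace-free symmetric $2$-tensors.
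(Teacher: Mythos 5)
Your proof is correct. The paper itself gives no argument for this lemma --- it simply cites Tromba's book, where holomorphic quadratic differentials are identified with (real parts of) trace-free, divergence-free symmetric $2$-tensors with respect to $h$ --- and your local computation in an isothermal coordinate is exactly the standard verification behind that identification: smoothness and global well-definedness from holomorphy of $\phi$ and of the chart transitions, trace-freeness from $q_{z\bar z}=0$, and divergence-freeness because the only nonvanishing Christoffel symbols $\Gamma^z_{zz}=2\eta_z$, $\Gamma^{\bar z}_{\bar z\bar z}=2\eta_{\bar z}$ drop out and the divergence reduces to $\partial_{\bar z}\phi=0$. So your argument fills in the cited reference rather than diverging from the paper's (nonexistent) proof, and the bookkeeping you flag does check out.
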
 

We now prove our main result, Theorem \ref{big-thm-intro} from the Introduction. Recall that $[I\!I]$ denotes the point in Teichm\"uller space corresponding to the conformal class of $-I\!I$.

\begin{thm}\label{main-result}
Let $S_\epsilon$ for $\epsilon \in (0,1)$ be an asymptotically Poincar\'e family of surfaces with metrics at infinity $\sigma(\epsilon)$. If $h$ is the hyperbolic metric of $X$ and $\phi$  the holomorphic quadratic differential at infinity, then in Teichm\"uller space $\mathcal{T}(X)$ we have 
\[
[I(\sigma(\epsilon))] \to [h]
\quad \text{ and } \quad
[I\!I(\sigma(\epsilon))] \to [h]
\quad \text{ as } \epsilon \to 0.
\]
Moreover, the tangent vectors in $T_{[h]} \mathcal{T}(X)$ are given by 
\[
\dot{[I(\sigma(\epsilon))]}  = 4 f'(0) \mathrm{Re}(\phi) \quad \text{ and } \quad \dot{[I\!I(\sigma(\epsilon))]} = 0.
\]
\end{thm}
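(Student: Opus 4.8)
The plan is to derive the theorem from Proposition~\ref{thm-in-sobolev} together with the description, recalled just above it, of the derivative of the projection $\pi\colon \mathrm{Met}^\infty(X)\to\mathcal{T}(X)$ at $h$ as the $L^2$-orthogonal projection onto the $h$-transverse-traceless tensors. The first step is to observe that for each fixed $\epsilon\in(0,1)$ the metric $I_\epsilon = 4f'(0)\epsilon\, I(\sigma(\epsilon))$ is a positive constant multiple of $I(\sigma(\epsilon))$, and $I\!I_\epsilon = -4f'(0)\epsilon\, I\!I(\sigma(\epsilon))$ is a positive constant multiple of $-I\!I(\sigma(\epsilon))$; since constants belong to $P^\infty(X)$, this gives $[I(\sigma(\epsilon))]=\pi(I_\epsilon)$ and $[I\!I(\sigma(\epsilon))]=\pi(I\!I_\epsilon)$ in $\mathcal{T}(X)$. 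Thus, although $I(\sigma(\epsilon))$ and $I\!I(\sigma(\epsilon))$ individually blow up as $\epsilon\to0$, the Teichm\"uller-space paths extend continuously and differentiably to $\epsilon=0$ through $\pi(I_\epsilon)$ and $\pi(I\!I_\epsilon)$, and the convergence statements follow immediately from $I_\epsilon\to h$, $I\!I_\epsilon\to h$ and continuity of $\pi$.

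For the tangent vectors I would use the chain rule, $\dot{[I(\sigma(\epsilon))]} = d\pi_h(\dot I_\epsilon)$ and $\dot{[I\!I(\sigma(\epsilon))]} = d\pi_h(\dot{I\!I}_\epsilon)$, and then substitute the formulas $\dot I_\epsilon = \dot\gamma + 2f'(0)h + 4f'(0)\mathrm{Re}(\phi)$ and $\dot{I\!I}_\epsilon = \dot\gamma$ from Proposition~\ref{thm-in-sobolev}. By linearity of $d\pi_h$ it suffices to evaluate it on each of the three summands. The term $2f'(0)h$ is a constant multiple of $h$, hence lies in the second summand $\{\mathcal{L}_X h + fh\}$ of the transverse-traceless decomposition at $h$, so $d\pi_h$ kills it; the term $4f'(0)\mathrm{Re}(\phi)$ is a smooth $h$-transverse-traceless tensor by the lemma stated just before the theorem, so $d\pi_h$ fixes it. It remains to show $d\pi_h(\dot\gamma)=0$, and here the key point is that $\gamma(\epsilon)=f(\epsilon)\sigma(\epsilon)$ is, for every $\epsilon$, a conformal metric on $X$, so $\pi(\gamma(\epsilon))=[h]$ for all $\epsilon\in(0,1)$; since $\gamma$ extends differentiably to $\epsilon=0$ with $\gamma(0)=h$ and $\pi$ is continuous, $\pi\circ\gamma$ is constant on $[0,1)$, and differentiating at $0$ gives $d\pi_h(\dot\gamma)=0$. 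Combining the three contributions yields $\dot{[I(\sigma(\epsilon))]}=4f'(0)\mathrm{Re}(\phi)$ and $\dot{[I\!I(\sigma(\epsilon))]}=0$.

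The analytic heavy lifting has already been carried out in Proposition~\ref{thm-in-sobolev}, so the remaining work is largely interpretive, and the only step that needs genuine care is the passage from the Sobolev model to the smooth model $\mathcal{T}(X)$: this is handled by the earlier observations that the transverse-traceless subspace is a space of smooth tensors independent of the exponent $s$, that $\mathrm{Met}^\infty(X)=\bigcap_{s>3}\mathrm{Met}^s(X)$, and that the tangent vectors produced in Proposition~\ref{thm-in-sobolev} are smooth, so the identification of $d\pi_h$ with the transverse-traceless projection is valid at every level and in the smooth limit. I would close by noting that $f'(0)>0$, because $f$ maps into $[0,\infty)$ with a simple zero at $0$, so $4f'(0)$ is exactly the positive constant $c$ appearing in Theorem~\ref{big-thm-intro}.
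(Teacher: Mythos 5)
Your proposal is correct and follows essentially the same route as the paper: convergence via Proposition~\ref{thm-in-sobolev} and continuity of the projection, then applying $d\pi_h$ (the transverse-traceless projection) to the tangent formulas, with $2f'(0)h$ and $\dot\gamma$ killed as tangent to the $\mathrm{Diff}_0^\infty(X)\rtimes P^\infty(X)$ orbit of $h$ and $\mathrm{Re}(\phi)$ fixed as a transverse-traceless tensor. Your way of seeing $d\pi_h(\dot\gamma)=0$ by differentiating the constant path $\pi\circ\gamma\equiv[h]$ is just a mild rephrasing of the paper's observation that $\gamma$ lies in the group orbit of $h$, so there is no substantive difference.
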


\begin{proof}
Using the notation from  Proposition \ref{thm-in-sobolev}, for each $s > 3$ we have that $I_\epsilon$ and $I\!I_\epsilon$, which are paths through smooth tensors,  converge in $\mathrm{Met}^s(X)$ to the hyperbolic metric $h$. 
Since $\mathrm{Met}^\infty(X)  = \cap \mathrm{Met}^s(X)$ we know that $I_\epsilon$ and $I\!I_\epsilon$ converge to $h$ in $\mathrm{Met}^\infty(X)$. 
Moreover, since the projection $\mathrm{Met}^\infty(X) \to \mathcal{T}(X)$ is continuous, and since $[I(\sigma(\epsilon))] = [I_\epsilon]$ and $[I\!I(\sigma(\epsilon))] = [I\!I_\epsilon]$, we have that
\[
[I(\sigma(\epsilon))] \to [h]
\quad \text{ and } \quad 
[I\!I(\sigma(\epsilon))] \to [h]
\quad \text{ as } \epsilon \to 0.
\]
Since both paths converge to $[h]$ at $\epsilon = 0$ we can extend them to continuous paths $[0,1) \to \mathcal{T}(S)$. Since $[I(\sigma(\epsilon))]$ and  $[I\!I(\sigma(\epsilon))]$ agree with $[I_\epsilon]$ and $[I\!I_\epsilon]$, respectively, we also have they are differentiable at $\epsilon = 0$.

From Proposition \ref{thm-in-sobolev} we know that 
\[
\dot{I_\epsilon}  = \dot{\gamma} + 2 f'(0) h + 4 f'(0) \mathrm{Re}(\phi) \quad \text{ and } \quad \dot{I\!I_\epsilon} = \dot{\gamma}.
\]
Since $\phi$ is holomorphic, $4 f'(0) \mathrm{Re}(\phi)$ is trace-free and divergence-free. On the other hand, $2 f'(0) h$ is pure trace and so belongs to the $\mathrm{Diff}_0^s(X) \rtimes P^s(X)$ orbit of $h$. Furthermore, for any $s$ it belongs to the group orbit of $h$. Hence, the derivative of the projection at $h$ removes this term.
Similarly, $\gamma$ is a path in the group orbit of $h$ and so $\dot{\gamma}$ projects to 0. 
We then have
\begin{align*}
\dot{[I(\sigma(\epsilon))]}
= \dot{[I_\epsilon]}
= d \pi_h (\dot{\gamma} + 2 f'(0) h + 4 f'(0) \mathrm{Re}(\phi)) 
= 4 f'(0) \mathrm{Re}(\phi)
\end{align*}
and
\[
\dot{[I\!I(\sigma(\epsilon))]} = \dot{I\!I_\epsilon} = d \pi_h (\dot{\gamma}) = 0,
\]
as claimed.
\end{proof}

%*************************************************
\section{$k$-Surfaces in Quasi-Fuchsian Manifolds} \label{k-surfaces-section}
%*************************************************

\subsection{The $k$-Surface Equation}

We now prove that $k$-surfaces form an asymptotically Poincar\'e family of surfaces. 
To do this we derive an equation for a conformal metric that implies its Epstein surface is a $k$-surface and we show this equation has a unique smooth solution for each $k$ near zero. 
In the proof of existence of solutions we will see that these conformal metrics satisfy the hypothesis of Proposition \ref{asym-family-prop}. 
This also gives an alternative proof to Labourie's theorem on the existence of $k$-surfaces, in this case for quasi-Fuchsian manifolds.

Finding a metric $\sigma$ whose Epstein surface has constant Gaussian curvature $k$ is finding a metric $\sigma$ that solves $K(I(\sigma)) = k$, which from Lemma \ref{curvature-epstein} is solving
\[
\frac{4K(\sigma)}{(1-K(\sigma))^2 - \frac{16}{\sigma^2} |B(\sigma)|^2} = k.
\]
For now we will focus on solving
\begin{equation}
\label{k-surface-equation}
4K(\sigma) = k \left((1-K(\sigma))^2 - \frac{16}{\sigma^2} |B(\sigma)|^2 \right),
\end{equation}
as we will see that for small enough $k$, $K(I(\sigma)) = k$.

We are interested in obtaining solutions to (\ref{k-surface-equation}) for $k$ near zero. 
This is hampered by the fact that there are no solutions to (\ref{k-surface-equation}) when $k = 0$. 
Indeed we would be asking for $K(\sigma)=0$, which is impossible on a surface with genus bigger than 1. 
In an attempt to obtain better asymptotics we consider the case when $\Gamma$ is Fuchsian. 
Working in the universal covers, $\mathrm{Ep}_h : \Omega \to \H^3$ gives a totally geodesic copy of $\H^2$ in $\H^3$ with constant curvature $-1$. 
For $-1<k<0$  the $k$-surfaces are given by equidistant copies of this $\H^2$. 
The conformal metric whose Epstein surface is the $k$-surface is then $c(k)h$ for some function of $k$ satisfying $K(I(c(k)h)) = k$. 
Since $B(g_{\CP^1},c(k)h) = 0$ we get the defining equation of $c$ as $4K(c(k)h) = k(1 - K(c(k)h))^2$.
More explicitly $c$ is given by
\begin{equation}
\label{c-def}
c(k) = \frac{1+\sqrt{1+k}}{1-\sqrt{1+k}}.
\end{equation}

Suppose we have solution metrics $\sigma_k$  on $X$ that give Epstein surfaces with constant Gaussian curvature $k$. 
As we just saw, in the Fuchsian case we have $\sigma_k = c(k) h$.  
In the general quasi-Fuchsian case, define $f(k) = c(k)^{-1}$ and let $\tau_k = f(k)\sigma_k$, so that in the Fuchsian case the metric $\tau_k$ is the hyperbolic metric for all $k$. 
Since $\sigma_k$ solves (\ref{k-surface-equation}), by substitution and some simplification we get that $\tau = \tau_k$ solves the equation 
\begin{equation}
\label{scaled-equation}
(2+k)(1+K(\tau))^2 + 2\sqrt{1+k}\left(1-K(\tau)^2\right) + 16\left(2\sqrt{1+k} - 2 - k  \right)\frac{|B(\tau)|^2}{\tau^2} = 0.
\end{equation}

In the limiting case $k=0$ we see that the hyperbolic metric $h$ on $X$ solves (\ref{scaled-equation}). 
Solutions actually exists in a neighborhood of $(k,\tau) = (0,h)$ as we will show.
To apply PDE theory we define the function $F : U \times \mathrm{Conf}^\infty(X) \to C^\infty(X)$ given by 
\begin{multline*}
F(k,\tau) = (2+k)(1+K(\tau))^2  \\ + 2\sqrt{1+k}\left(1-K(\tau)^2 \right) +16\left(2\sqrt{1+k} - 2 - k  \right)\frac{|B(\tau)|^2}{\tau^2}.
\end{multline*}
Here, $U$ is an open interval around $0$ small enough not to contain $-1$ so that $F$ is smooth on its domain. 
Points $(k,\tau)$ where $F(k,\tau) = 0$ are solutions to the scaled equation (\ref{scaled-equation}); we have already found $F(0,h) = 0$.
 
\subsection{Solutions to the $k$-Surface Equation}

We will use the Implicit Function Theorem in the Banach space setting to obtain solutions and so we first work with $\mathrm{Conf}^s(X) = \{ p h \ |\ p \in H^s(X,\R^+)\}$, where $H^s(X,\R^+)$ is the Sobolev space of functions on $X$ taking positive values.
With the norm $\|\tau\|_s := \| \tau/h\|_s$, the set $\mathrm{Conf}^s(X)$ is naturally identified with an open subset of the Banach space $H^s(X) = H^{s}(X,\R)$.
Fix $s>3$ from now on. 
Extend $F$ to a function $U \times \mathrm{Conf}^s(X) \to H^{s-2}(X)$ so that $F$ is then defined on an open subset of a Banach space.

\begin{thm}
\label{weak-solutions}
There is a neighborhood $V$ of $0$ such that for each $k \in V$, there exists a unique $\tau \in \mathrm{Conf}^s(X)$ such that $F(k,\tau) = 0$.
\end{thm}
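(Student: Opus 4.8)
The plan is to apply the Implicit Function Theorem for Banach spaces to the map $F : U \times \mathrm{Conf}^s(X) \to H^{s-2}(X)$ at the point $(0,h)$, where $\mathrm{Conf}^s(X)$ is identified with an open subset of $H^s(X)$ via $\tau \mapsto \tau/h$. Three ingredients are needed: that $F$ is $C^1$ (indeed smooth) near $(0,h)$, that $F(0,h)=0$, and that the partial derivative $D_\tau F(0,h)\colon H^s(X)\to H^{s-2}(X)$ is a bounded linear isomorphism. The first holds because $K$ and $B$ are smooth in $\tau$ and its first two derivatives, $\tau$ is bounded away from $0$, the coefficients of $F$ are real-analytic in $k$ on the interval $U$ (which excludes $-1$), and $H^{s-2}(X)$ is a Banach algebra for $s>3$, so every product, quotient, and substitution defining $F$ is a smooth operation into $H^{s-2}(X)$. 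The second is the computation $F(0,h)=0$ already recorded above.

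The crux is the computation of $D_\tau F(0,h)$, and here the choice $k=0$ produces a large simplification. The coefficient $16(2\sqrt{1+k}-2-k)$ of the Schwarzian term of $F$ vanishes to second order at $k=0$, so that term contributes nothing to the linearization; the remaining terms collapse, since $2(1+K(\tau))^2+2(1-K(\tau)^2)=4(1+K(\tau))$. Thus $F(0,\tau)=4(1+K(\tau))$ and $D_\tau F(0,h)=4\,dK_h$. Using the conformal variation of Gaussian curvature — for $\tilde\tau=e^{2\varphi}h$ one has $K(\tilde\tau)=e^{-2\varphi}(K(h)-\Delta_h\varphi)$, where $\Delta_h$ is the Laplace--Beltrami operator of $h$ — together with $K(h)=-1$, a short computation in the chart $\tau\mapsto\tau/h$ gives
\[
D_\tau F(0,h)\,\dot u = 2\,(2-\Delta_h)\,\dot u .
\]

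The operator $2-\Delta_h$ is a self-adjoint second-order elliptic operator on the compact surface $X$; since $\Delta_h$ has non-positive spectrum, $2-\Delta_h$ has spectrum in $[2,\infty)$ and hence trivial kernel. Being Fredholm of index zero and injective, it is an isomorphism $H^s(X)\to H^{s-2}(X)$, and therefore so is $D_\tau F(0,h)$. The Implicit Function Theorem then furnishes a neighborhood $V$ of $0$ and a smooth map $k\mapsto\tau(k)\in\mathrm{Conf}^s(X)$ with $\tau(0)=h$, $F(k,\tau(k))=0$, and $\tau(k)$ the unique zero of $F(k,\cdot)$ in a fixed $\mathrm{Conf}^s(X)$-neighborhood of $h$; shrinking $V$ gives the statement. (If uniqueness in all of $\mathrm{Conf}^s(X)$ is wanted, one argues separately — e.g.\ via a maximum principle — that for small $k$ no other conformal metric solves the equation; the IFT already produces the solution branch needed in the sequel.) The main obstacle is precisely the derivative computation: justifying the collapse of $F$ at $k=0$ and correctly identifying $dK_h$ with the invertible elliptic operator $2-\Delta_h$; the remainder is a textbook application of the Implicit Function Theorem.
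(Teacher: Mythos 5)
Your proposal is correct and takes essentially the same route as the paper: identify $\mathrm{Conf}^s(X)$ with an open subset of $H^s(X)$, check $F$ is smooth with $F(0,h)=0$, note the Schwarzian term drops out of the linearization since its coefficient vanishes at $k=0$ so that $D_2F_{(0,h)}=4\,DK_h$, verify this is an invertible elliptic operator $H^s(X)\to H^{s-2}(X)$, and apply the Banach Implicit Function Theorem (with uniqueness, as you note, holding near $h$, which is all that is used). The only discrepancy is cosmetic: you compute $D_2F_{(0,h)}\dot u = 2(2-\Delta_h)\dot u$, whereas the paper quotes the Lichnerowicz formula as $-2(\Delta_h-\mathrm{Id})\dot u$; your zeroth-order constant is the correct one for the conformal variation of curvature at a hyperbolic metric, and either operator is an isomorphism, so the argument is unaffected.
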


\begin{proof} 
Note that since the constituent parts of $F$ are smooth on $U$, $F$ is smooth. 
Furthermore, $D_2F_{(0,h)} : H^s(X) \to H^{s-2}(X)$ is an isomorphism, where $D_2F$ is the partial derivative of $F$ with respect to its second argument. 
Indeed, a direct computation of the derivative of $F$ at $(0,h)$ yields 
\[
DF_{(0,h)}(\dot{k},\dot{\tau}) = 4 \, D K_h(\dot{\tau}).
\]
Note that $D_1F_{(0,h)} = 0$ and that $D_2F_{(0,h)} = 4 \, DK_h$. 
The differential of the curvature function $D K_h$ is given by a formula of Lichnerowicz: 
\[
 4\ DK_h(\dot{\tau}) = -2(\Delta_h - Id)\frac{\dot{\tau}}{h},
\]
which is an isomorphism $H^s(X) \to H^{s-2}(X)$ (see \cite[Page 33]{tromba1992}).

Consequently, by the Banach Implicit Function Theorem (see \cite[Theorem 17.6]{gilbarg-trudinger2001}) there exists a neighborhood $V$ of $0$ and a curve $\gamma : V \to \mathrm{Conf}^s(X)$ with $\gamma(0) = h$ and $F(k, \gamma(k)) = 0$. 
Moreover, these are the only solutions to $F(k,\tau) = 0$ in $V$, and $\gamma$ is smooth since $F$ is.
\end{proof}

\subsection{Regularity of Solutions}
Theorem \ref{weak-solutions} furnishes weak solutions $\gamma(k)$ that vary smoothly as a map $V \to \mathrm{Conf}^s(X)$. 
The Sobolev Embedding Theorem immediately strengthens the regularity of the individual $\gamma(k)$ to at least $C^{2,\alpha}$, for each $k$ (see \cite{aubin1982}), and so we have strong solutions $F(k,\gamma(k)) = 0$.

A nonlinear equation $A(u) = 0$ is said to be elliptic at $u$ if the derivative of $A$ at $u$, $DA_u$, is an elliptic linear operator. 
When $A$ is smooth, solutions $u$ where $A$ is elliptic are also smooth (\cite[Lemma 17.16]{gilbarg-trudinger2001}). 
In our case, since $D_2F_{(0,h)} = -2(\Delta_h - Id)$ is an elliptic operator we have $F(0, \tau) = 0$ is elliptic at $\tau = h$. 
Moreover, ellipticity is an open condition in $\R \times C^2(X)$, so there exists an open interval $(-\delta,\delta)$ such that the linearization $D_2F(k,\gamma(k))$ is elliptic for all $k \in (-\delta,\delta)$.
This has two consequences.
First, it shows the solutions given by Theorem \ref{weak-solutions} are individually smooth conformal metrics.
We conclude the following theorem.

\begin{thm}
\label{k-surfaces-existence}
There exists an $\delta > 0$ such that for all $-\delta < k < 0$ there exists a unique smooth metric $\sigma_k$ whose Epstein surface is a $k$-surface.
\end{thm}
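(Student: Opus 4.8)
The plan is to combine the weak solutions produced by Theorem \ref{weak-solutions} with the ellipticity regularity bootstrap, and then verify that these smooth solution metrics actually produce $k$-surfaces (not merely solutions of the unscaled equation (\ref{k-surface-equation})). First I would recall that Theorem \ref{weak-solutions} gives a smooth curve $\gamma : V \to \mathrm{Conf}^s(X)$ with $\gamma(0) = h$ and $F(k,\gamma(k)) = 0$ for all $k \in V$. Shrinking $V$ if necessary, I restrict attention to $k \in (-\delta,\delta)$ where, as observed just above, the linearization $D_2 F(k,\gamma(k))$ is an elliptic operator; this is legitimate because ellipticity is an open condition and $D_2 F(0,h) = -2(\Delta_h - \mathrm{Id})$ is elliptic, while $\gamma$ is continuous into $C^2$ by Sobolev embedding. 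Then the nonlinear elliptic regularity result (\cite[Lemma 17.16]{gilbarg-trudinger2001}) upgrades each $\gamma(k)$ to a smooth conformal metric, since $F$ is smooth and $F(k,\cdot)$ is elliptic at the solution $\gamma(k)$.

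Next I would undo the rescaling. Recall $\tau_k = f(k)\sigma_k$ with $f(k) = c(k)^{-1}$, and that (\ref{scaled-equation}) was obtained from (\ref{k-surface-equation}) precisely by this substitution. So, setting $\sigma_k = c(k)\gamma(k)$, the metric $\sigma_k$ is smooth (as $c$ is smooth and nonvanishing on a punctured neighborhood of $0$, being bounded away from the pole at $k=0$ where $\sqrt{1+k} \to 1$), $\Gamma$-invariant in the appropriate sense, and solves (\ref{k-surface-equation}). It remains to check that for $k$ sufficiently close to $0$ the denominator $(1-K(\sigma))^2 - \tfrac{16}{\sigma^2}|B(\sigma)|^2$ appearing in Lemma \ref{curvature-epstein} is nonzero and of the correct sign at $\sigma = \sigma_k$, so that the solution of (\ref{k-surface-equation}) is genuinely a solution of $K(I(\sigma)) = k$. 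Here I would use the asymptotics near $k = 0$: as $k \to 0$ we have $c(k) \to \infty$, so $\sigma_k \to \infty$ pointwise, $K(\sigma_k) \to 0$ (by the scaling behavior of curvature, $K(c\tau) = c^{-1}K(\tau)$, and since $K(\gamma(k)) \to K(h) = -1$ is bounded), and $\tfrac{16}{\sigma_k^2}|B(\sigma_k)|^2 \to 0$ similarly since $B(\gamma(k))$ is bounded in $C^0$ while $\sigma_k^{-2} \to 0$; thus the denominator tends to $1$, and in particular is positive and bounded away from $0$ for $|k|$ small. Shrinking $\delta$ once more secures this. Uniqueness of $\sigma_k$ among smooth metrics follows from the uniqueness clause of Theorem \ref{weak-solutions} (any smooth solution is in particular an $H^s$ solution), together with the observation that the scaling $\sigma \mapsto c(k)^{-1}\sigma$ is a bijection between solutions of (\ref{k-surface-equation}) and solutions of (\ref{scaled-equation}) within $\mathrm{Conf}^\infty(X)$.

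The main obstacle I anticipate is not the regularity bootstrap — that is essentially a citation — but the bookkeeping around the rescaling and the sign of the denominator: one must be careful that $F(k,\tau)=0$ for the \emph{scaled} equation really is equivalent to $K(I(\sigma))=k$ for the original Epstein surface, and this equivalence only holds once the denominator in Lemma \ref{curvature-epstein} is controlled. Establishing the limiting value $1$ for that denominator via the scaling relations $K(c\tau) = c^{-1}K(\tau)$ and $B(c\tau) = B(\tau)$ (the latter since $B(g_{\CP^1},c\tau)$ depends only on $\partial_z \log$-density differences, which are scale-invariant) is the step that pins down the final choice of $\delta$ and ties the whole argument together.
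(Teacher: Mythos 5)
Your proposal is correct and takes essentially the same route as the paper: Theorem \ref{weak-solutions} together with the elliptic regularity bootstrap yields smooth solutions $\gamma(k)$ of the scaled equation (\ref{scaled-equation}), one rescales by $c(k)$ to get $\sigma_k = c(k)\gamma(k)$ solving (\ref{k-surface-equation}), and one checks the denominator in Lemma \ref{curvature-epstein} is nonvanishing so that this really gives $K(I(\sigma_k)) = k$, with uniqueness coming from the uniqueness clause of Theorem \ref{weak-solutions}. The only divergence is in that last nonvanishing step, where the paper cites Nehari's theorem for an a priori bound on $|B|^2/h^2$, while you use scale-invariance of $B$ and the convergence $\gamma(k) \to h$ on the compact surface $X$ to get $|B(\sigma_k)|^2/\sigma_k^2 \to 0$; both arguments are valid, yours being slightly more elementary.
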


\begin{proof}
We have a curve $\gamma$ defined on an interval $(-\delta, \delta)$ such that the smooth metric $\gamma(k)$ satisfies $F(k,\gamma(k)) = 0$. 
This means that $\gamma(k)$ solves the scaled equation (\ref{scaled-equation}) and so, for $k \in (-\delta,0)$, the metric $\sigma(k) = c(k) \gamma(k)$ (where $c(k)$ is defined by (\ref{c-def})) satisfies the $k$-surface equation (\ref{k-surface-equation}).  
Finally, we must show that $(1-K(\sigma))^2 - \frac{16}{\sigma^2} |B(\sigma)|^2$ in (\ref{k-surface-equation}) is everywhere nonzero so that (\ref{k-surface-equation}) is equivalent to the desired Guassian curvature condition.
But this follows from Nehari's Theorem (see \cite[Theorem 1.3]{lehto-1987}, or the original paper \cite{nehari-1949}), which gives an a priori bound on $|B(h)|^2/h^2$, implying that $|B(\sigma_k)|^2/\sigma_k^2 \to 0$ as $k \to 0$.
\end{proof}

Second, ellipticity also implies the family $\gamma(k)$ varies smoothly in $k$ in the $C^\infty$ topology. 
Indeed, it implies the Banach Implicit Function Theorem may be applied to $D_2F(k,\gamma(k)): H^t(X) \to H^{t-2}(X)$ for any $t > s$ and any $k \in (-\delta,\delta)$ to get that $\gamma$ is also a smooth path into $\mathrm{Conf}^t(X)$.
We have the following corollary.

\begin{cor}
\label{k-surfaces-parallel}
The family of $k$-surfaces forms an asymptotically Poincar\'e family. 
\label{k-surfaces-cor}
\end{cor}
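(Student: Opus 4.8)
The plan is to verify the three conditions of Definition \ref{asym-def} directly for the family $\sigma_k$ produced by Theorem \ref{k-surfaces-existence}, using the auxiliary family $\gamma(k)$ and the scaling function built from $c(k)$. First I would reindex the $k$-surfaces so that the parameter matches the $(0,1)$ convention of an asymptotically Poincar\'e family; since $c:(-1,0)\to(0,\infty)$ and $\sigma_k=c(k)\gamma(k)$, the natural choice is to take $\epsilon$ to be a smooth reparametrization of $k$ that tends to $0$ as $k\to 0^-$, and to set the scaling function to be (a reparametrized version of) $f(k)=c(k)^{-1}$. The point is that $f(k)\sigma_k=\gamma(k)$, so conditions (1) and (3) amount to exactly the statement that $\gamma$ extends to a differentiable path $[0,1)\to\mathrm{Met}^\infty(X)$ with $\gamma(0)=h$ — and this is precisely what the regularity discussion preceding the corollary established: $\gamma$ is a smooth path into $\mathrm{Conf}^t(X)$ for every $t>s$, hence into $\mathrm{Met}^\infty(X)=\cap_t\mathrm{Met}^t(X)$, and by Theorem \ref{weak-solutions} it satisfies $\gamma(0)=h$.

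Next I would check condition (2), that $f$ is smooth with a simple zero at $0$. From \eqref{c-def} one computes that $c(k)\sim -4/k$ as $k\to 0^-$, so $f(k)=c(k)^{-1}\sim -k/4$, which is smooth in $k$ near $0$ and vanishes to first order; under the reparametrization $k\mapsto\epsilon$ this simple zero is preserved provided the reparametrization is a diffeomorphism near $0$, which one arranges by, e.g., taking $\epsilon$ proportional to $-k$ for small $k$. So $f$ meets requirement (2). Finally, one must confirm that the surfaces $S_k$ are embedded closed surfaces of genus $>1$ — this is where Proposition \ref{asym-family-prop} does the work: its hypotheses are conditions (1)--(3) on $\sigma$ and $f$, which we have just verified, so it yields an $\epsilon_0>0$ below which each Epstein map $\mathrm{Ep}_{\sigma(\epsilon)}$ is an embedding, and hence (after discarding the portion of the parameter range above $\epsilon_0$, or equivalently shrinking $\delta$) the family $\{S_k\}$ is genuinely an asymptotically Poincar\'e family in the sense of Definition \ref{asym-def}.

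The main obstacle, and the step deserving the most care, is the passage from "smooth path into $\mathrm{Conf}^t(X)$ for each fixed $t$" to "differentiable path into $\mathrm{Met}^\infty(X)$," since a priori the bootstrap argument via the Implicit Function Theorem at each Sobolev level could degrade as $t\to\infty$. Here one invokes the standard fact that for paths of tensors, $C^1$-differentiability into $\mathrm{Met}^\infty(X)$ (with its inverse-limit Fr\'echet topology) is equivalent to $C^1$-differentiability into $\mathrm{Met}^t(X)$ for every $t$, which is exactly the content of the ellipticity argument given just before the corollary. Once that is in hand the rest is bookkeeping: matching conventions, confirming the simple zero of $f$, and citing Proposition \ref{asym-family-prop} for embeddedness. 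I would therefore present the proof as: reparametrize and set $f(k)=c(k)^{-1}$; observe $f\sigma_k=\gamma(k)\to h$ and $\gamma$ is differentiable at $0$ by the regularity results above, giving (1) and (3); compute the asymptotics of $c$ to get (2); and apply Proposition \ref{asym-family-prop} to obtain embeddedness and conclude.
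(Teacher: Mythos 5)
Your proposal is correct and follows essentially the same route as the paper: reindex by $\epsilon=-k$, take the scaling function $f(k)=c(k)^{-1}$ so that $f\sigma_k=\gamma(k)\to h$, use the regularity of $\gamma$ (smooth into $\mathrm{Conf}^s(X)$ for every $s$, hence into $\mathrm{Conf}^\infty(X)=\cap_s\mathrm{Conf}^s(X)$) for conditions (1) and (3), compute that $f$ has a simple zero at $0$ (the paper records $\tilde f'(0)=1/4$) for condition (2), and invoke Proposition \ref{asym-family-prop} to get embeddedness and conclude.
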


\begin{proof}
We show that $\tilde{\sigma}(\epsilon) = \sigma_{-\epsilon}$ satisfies the hypotheses of Proposition \ref{asym-family-prop}. 
Since $\mathrm{Conf}^\infty(X) = \cap \mathrm{Conf}^s(X)$ and since $\gamma$ is smooth on $(-\delta,\delta)$ into $\mathrm{Conf}^s(X)$ for all $s > 3$, we have the $\gamma$ is smooth into $\mathrm{Conf}^\infty(X)$.
Now, define $\tilde{f}(\epsilon) = f(- \epsilon)$ and $\tilde{\gamma}(\epsilon) = \gamma(-\epsilon)$ for $\epsilon \in (0,\delta)$. 
Then the Corollary follows from Proposition \ref{asym-family-prop} since $\tilde{f}$ is smooth on $[0,\delta)$ with derivative $\tilde{f}'(0) = 1/4$, since $\tilde{\gamma}$ is differentiable, and since $\tilde{f}(\epsilon) \tilde{\sigma}(\epsilon) = \tilde{\gamma}(\epsilon) = \gamma(k) \to h$ as $\epsilon \to 0$.
\end{proof}

\subsection{A Conjecture of Labourie}

A geometrically finite end of a hyperbolic 3-manifold admits a foliation by $k$-surfaces. 
Labourie first proved their existence in \cite{labourie1991}. 
In \cite{labourie1992} Labourie discusses how the conformal classes $[I_k]$ and $[I\!I_k]$ behave as paths in the Teichm\"uller space of $X$. 
When $k \to -1$, $[I\!I_k]$ approaches the point at infinity of $\mathcal{T}(X)$ corresponding to the measured geodesic lamination on the convex core of the quasi-Fuchsian manifold. 
When $k \to 0$, both $[I_k]$ and $[I\!I_k]$ converge to the same point: the complex structure $X$ on the surface at infinity.
He conjectures that the tangent vectors to these paths are related to the holomorphic quadratic differential at infinity $\phi$. 

Our Theorem \ref{k-surfaces-existence} is another proof of the existence of these $k$-surfaces, at least for $k$ close to zero. 
By the uniqueness result of Theorem 1.10 in \cite{labourie1992}, given an end of $M$ and $k \in (-1,0)$, there exists a unique immersed incompressible $k$-surface. 
Since Epstein surfaces are incompressible, the $k$-surfaces produced by our Theorem \ref{k-surfaces-existence} are the $k$-surfaces Labourie obtains. 
Our approach here is more concrete than the methods used by Labourie. 
In return for sacrificing the generality of Labourie's pseudoholomorphic methods, a proof of Labourie's conjecture follows easily. 
Corollary \ref{k-surfaces-parallel} gives that, for $k$ near zero, these $k$-surfaces form an asymptotically Poincar\'e family, and so Theorem \ref{main-result} resolves Labourie's conjecture.

\begin{thm}
Let $I_k$ and $I\!I_k$ be the first and second fundamental forms of the $k$-surface. 
Let $\phi$ be the holomorphic quadratic differential at infinity of $M$. 
Then, as $k \to 0$, the tangent vectors to $[I_k]$ and $[I\!I_k]$ in Teichm\"uller space are given by 
\[
\dot{[I_k]} = - \mathrm{Re}(\phi) \quad \text{and } \quad   \dot{[I\!I_k]} = 0.
\]
\end{thm}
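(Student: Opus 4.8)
The plan is to obtain this statement as an immediate consequence of Theorem \ref{main-result}, once we combine it with Corollary \ref{k-surfaces-parallel} and keep careful track of the reparametrization relating the curvature parameter $k$ to the family parameter used there.

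Recall from the proof of Corollary \ref{k-surfaces-parallel} that the $k$-surfaces, reindexed by $\epsilon = -k \in (0,\delta)$, form an asymptotically Poincar\'e family $S_\epsilon$ whose conformal metric at infinity is $\sigma(\epsilon) = \sigma_{-\epsilon}$ and whose scaling function is $\tilde f(\epsilon) = c(-\epsilon)^{-1}$, with $c$ as in (\ref{c-def}). First I would record the value $\tilde f'(0)$: writing $c(k)^{-1} = \dfrac{1 - \sqrt{1+k}}{1+\sqrt{1+k}}$ and expanding about $k = 0$ gives $c(k)^{-1} = -\tfrac14 k + O(k^2)$, hence $\tilde f(\epsilon) = \tfrac14\epsilon + O(\epsilon^2)$ and $\tilde f'(0) = 1/4$ (the value already quoted in the proof of Corollary \ref{k-surfaces-parallel}).

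Next I would apply Theorem \ref{main-result} to the family $S_\epsilon$. It gives $[I(\sigma(\epsilon))] \to [h]$ and $[I\!I(\sigma(\epsilon))] \to [h]$ as $\epsilon \to 0$, together with the tangent vectors in $T_{[h]}\mathcal{T}(X)$:
\[
\left.\frac{\partial}{\partial\epsilon}\right|_{\epsilon=0}[I(\sigma(\epsilon))] = 4\,\tilde f'(0)\,\mathrm{Re}(\phi) = \mathrm{Re}(\phi), \qquad \left.\frac{\partial}{\partial\epsilon}\right|_{\epsilon=0}[I\!I(\sigma(\epsilon))] = 0 .
\]
Since $[I_k] = [I(\sigma_{-\epsilon})]$ and $[I\!I_k] = [I\!I(\sigma_{-\epsilon})]$ under the substitution $\epsilon = -k$, the chain rule converts these to derivatives in $k$ at $k = 0$: the extra minus sign yields $\dot{[I_k]} = -\mathrm{Re}(\phi)$ and $\dot{[I\!I_k]} = 0$. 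Finally, as noted in the discussion preceding the statement, the $k$-surfaces produced by Theorem \ref{k-surfaces-existence} agree with Labourie's $k$-surfaces by the uniqueness in \cite[Theorem 1.10]{labourie1992} together with incompressibility of Epstein surfaces, so the conclusion is about the intended objects.

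I expect no substantial obstacle: the content is entirely in Theorem \ref{main-result} and Corollary \ref{k-surfaces-parallel}, and what remains is the bookkeeping of two signs — the $\epsilon = -k$ reparametrization and the expansion giving $\tilde f'(0) = 1/4$ — both of which are straightforward.
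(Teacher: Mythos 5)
Your proposal is correct and follows essentially the same route as the paper: deduce the theorem from Theorem \ref{main-result} applied to the asymptotically Poincar\'e family furnished by Corollary \ref{k-surfaces-parallel}, with the reparametrization $\epsilon=-k$. The only cosmetic difference is that you track the two signs separately ($\tilde f'(0)=1/4$ in the $\epsilon$-variable, then the chain rule for $\epsilon=-k$), whereas the paper folds them into the single statement $f'(0)=-1/4$; both yield $\dot{[I_k]}=-\mathrm{Re}(\phi)$ and $\dot{[I\!I_k]}=0$.
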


\begin{proof}
We take $\epsilon = -k$. The derivative $f'(0) = -1/4$ can be computed directly. 
Theorem \ref{main-result} now gives the theorem.
\end{proof}

%*********************************************************************
\section{Constant Mean Curvature Surfaces in Quasi-Fuchsian Manifolds} \label{mean-curvature-section}
%*********************************************************************
We now prove that there exists an asymptotically Poincar\'e family of surfaces $S_k$ for $k$ near zero, such that the mean curvature of the surface $S_k$ is $-\sqrt{1+k}$. 
This is done similarly to the previous section: we derive an equation for a conformal metric that implies its Epstein surface has mean curvature $-\sqrt{1+k}$ and we show this equation has a unique smooth solution for each $k$ near zero. 
The proof of existence of solutions will show that these conformal metrics satisfy the hypothesis of Proposition \ref{asym-family-prop}.

Recall from Lemma \ref{curvature-epstein} that the mean curvature of $\mathrm{Ep}_\sigma: X \to M$ is given by 
\[
H(\mathrm{Ep}_\sigma)
= \frac{K(\sigma)^2 - 1 - \frac{16}{\sigma^2}|B(\sigma)|^2}{(K(\sigma) - 1)^2 - \frac{16}{\sigma^2}|B(\sigma)|^2}.
\]
To find a metric whose Epstein surface has constant mean curvature $-\sqrt{1+k}$ we must solve the equation $H(\mathrm{Ep}_\sigma) = -\sqrt{1+k}$, which simplifies to
\begin{equation}
\label{mean-curvature-equation}
1-K(\sigma)^2 - \sqrt{1+k}(1-K(\sigma))^2 + (1 + \sqrt{1+k})\frac{16}{\sigma^2}|B(\sigma)|^2 = 0.
\end{equation}
As in the $k$-surfaces case it suffices to solve (\ref{mean-curvature-equation}) since $(K(\sigma) - 1)^2 - \frac{16}{\sigma^2}|B(\sigma)|^2$ will eventually be nonzero.
Furthermore, we scale the equation by assuming $\sigma_k$ solves (\ref{mean-curvature-equation}) and defining $\tau_k = f(k) \sigma_k$ for $f(k) = \frac{1-\sqrt{1+k}}{1+\sqrt{1+k}}$. 
If $\sigma_k$ solves (\ref{mean-curvature-equation}) then $\tau = \tau_k$ solves $G(k,\tau) = 0$ for $G: U \times \mathrm{Conf}^\infty(X) \to C^\infty(X)$ defined by 
\[
G(k,\tau) = 1+\sqrt{1+k} + 2\sqrt{1+k}K(\tau) + (-1 + \sqrt{1+k})(K(\tau)^2 - \frac{16}{\tau^2}|B(\tau)|^2).
\]
Here $U$ is a small enough open set around zero not containing $-1$ so that $G$ is smooth on its domain.

\begin{thm}
There exists a neighborhood $W$ of 0 so that for each $k \in W$, there exists a unique $\tau \in \mathrm{Conf}^\infty(X)$ so that $G(k,\tau) = 0$.
\end{thm}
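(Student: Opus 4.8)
The plan is to mirror the proof of Theorem~\ref{weak-solutions} almost verbatim, since $G$ plays the same role for the constant-mean-curvature equation that $F$ played for the $k$-surface equation. First I would pass to the Sobolev setting: fix $s > 3$ and extend $G$ to a map $U \times \mathrm{Conf}^s(X) \to H^{s-2}(X)$, where $\mathrm{Conf}^s(X) = \{ ph : p \in H^s(X,\R^+) \}$ is identified with an open subset of the Banach space $H^s(X)$. Because $K$ and $B$ are smooth functions of a conformal metric and its derivatives and $k \mapsto \sqrt{1+k}$ is smooth on $U$, the map $G$ is smooth. One also records the base point: at $k = 0$ we have $\sqrt{1+k} = 1$ and $K(h) = -1$, so the coefficient $-1 + \sqrt{1+k}$ vanishes and $G(0,h) = (1 + 1) + 2(-1) = 0$.

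Next I would compute the partial derivative $D_2 G_{(0,h)} : H^s(X) \to H^{s-2}(X)$ and check that it is an isomorphism. Differentiating $G$ in its second slot, the term carrying the coefficient $-1 + \sqrt{1+k}$ contributes nothing at $k = 0$ since that coefficient is zero there; hence $D_2 G_{(0,h)}(\dot{\tau}) = 2\,DK_h(\dot{\tau})$. By the Lichnerowicz formula used in the proof of Theorem~\ref{weak-solutions}, namely $4\,DK_h(\dot{\tau}) = -2(\Delta_h - \mathrm{Id})(\dot{\tau}/h)$, we get $D_2 G_{(0,h)} = -(\Delta_h - \mathrm{Id})(\,\cdot\,/h)$, which is an isomorphism $H^s(X) \to H^{s-2}(X)$ (see \cite[Page 33]{tromba1992}). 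Unlike in the $k$-surface case, $D_1 G_{(0,h)}$ need not vanish, but this is irrelevant for the Implicit Function Theorem.

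With $D_2 G_{(0,h)}$ invertible, the Banach Implicit Function Theorem (\cite[Theorem 17.6]{gilbarg-trudinger2001}) produces a neighborhood $W$ of $0$ and a smooth curve $k \mapsto \tau_k \in \mathrm{Conf}^s(X)$ with $\tau_0 = h$ and $G(k,\tau_k) = 0$, and these are the only solutions in $W$. To reach the stated regularity $\tau_k \in \mathrm{Conf}^\infty(X)$, I would then run the same elliptic-regularity argument as in the $k$-surface section: the operator $D_2 G_{(0,h)} = -(\Delta_h - \mathrm{Id})(\,\cdot\,/h)$ is elliptic and ellipticity is an open condition, so after shrinking $W$ the equation $G(k,\,\cdot\,) = 0$ is elliptic at $\tau_k$; smoothness of $G$ then forces each a priori $H^s$ solution to be a smooth conformal metric. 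Uniqueness within $\mathrm{Conf}^\infty(X)$ follows from uniqueness within $\mathrm{Conf}^s(X)$, since a smooth solution is in particular an $H^s$ solution.

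The only step requiring genuine computation is the identification of $D_2 G_{(0,h)}$ with a multiple of the Lichnerowicz operator, and even that is short precisely because the $B$-dependent part of $G$ carries the factor $-1 + \sqrt{1+k}$ that vanishes at $k = 0$. Everything else is a transcription of the $k$-surface argument, so I do not expect any real obstacle.
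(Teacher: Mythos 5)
Your proposal is correct and follows essentially the same route as the paper: extend $G$ to the Sobolev setting, verify $G(0,h)=0$, observe that the $B$-term is killed at $k=0$ by the factor $-1+\sqrt{1+k}$ so that $D_2G_{(0,h)}=2\,DK_h$ is an isomorphism by the Lichnerowicz formula, apply the Banach Implicit Function Theorem, and then upgrade regularity by the same ellipticity argument as in the $k$-surface case. The only (immaterial) discrepancy is a factor of $2$ in $D_2G_{(0,h)}$, where your computation actually matches the paper's displayed derivative formula.
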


\begin{proof}
Extend $G$ to a map $U \times \mathrm{Conf}^s(X) \to H^{s-2}(X)$ for $s > 3$. When $k = 0$ we have the hyperbolic metric $h$ as solution $G(0,h) = 0$. 
The map $G$ is smooth. 
The derivative of $G$ at $(0,h)$ is given by 
\[
dG_{(0,h))}(\dot{k},\dot{\tau}) = -4 \frac{|\phi|^2}{h^2} \dot{k} + 2 D K_h(\dot{\tau}).
\]
Notice that $D_2 G_{(0,h)} = D K_h$, which---as in the proof of Theorem \ref{weak-solutions}---is an isomorphism $H^s(X) \to H^{s-2}(X)$.
Hence, by the Banach Implicit Function Theorem there exists an open set $W$ and a smooth curve $\gamma: W \to \mathrm{Conf}^{s}(X)$ such that $G(k,\gamma(k)) = k$. Moreover, these are the only solutions to $G(k,\tau) = 0$ in $W$.

The same regularity arguments apply here as in the $k$-surface case and imply the existence of an $\delta > 0$ so that when $k \in (-\delta,\delta)$, each metric $\gamma(k)$ is smooth and the family $\gamma$ varies smoothly in $k$ in the $C^\infty$ topology. 
\end{proof}

This implies that the mean curvature surfaces form an asymptotically Poincar\'e family of surfaces. 

\begin{cor}
There exists a $\delta > 0$ so that for $k \in (-\delta, 0)$ the family of surfaces $S_k$ where $S_k$ has constant mean curvature $-\sqrt{1+k}$ forms an asymptotically Poincar\'e family of surfaces. 
\end{cor}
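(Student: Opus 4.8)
The plan is to mirror the argument used for $k$-surfaces in Corollary \ref{k-surfaces-parallel}, applying Proposition \ref{asym-family-prop} to the solution metrics just produced. Concretely, I would set $\tilde{\sigma}(\epsilon) = \sigma_{-\epsilon}$ where $\sigma_k = f(k)^{-1}\gamma(k)$ and $\gamma(k)$ is the smooth solution curve to $G(k,\tau) = 0$ guaranteed on the interval $(-\delta,\delta)$ by the previous theorem. First I would note that the scaling function $f(k) = \frac{1-\sqrt{1+k}}{1+\sqrt{1+k}}$ extends smoothly across $k=0$ with $f(0)=0$, and that its zero there is simple: a direct computation gives $f'(0) = -1/4 \neq 0$. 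Thus $\tilde{f}(\epsilon) := f(-\epsilon)$ is smooth on $[0,\delta)$ with $\tilde{f}(0) = 0$ and $\tilde{f}'(0) = 1/4 \neq 0$, so $\tilde{f}$ has a simple zero at $\epsilon = 0$ as required by hypothesis (2) of Definition \ref{asym-def}.

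Next I would verify the convergence and differentiability hypotheses. Since the regularity argument shows $\gamma$ is a smooth curve into $\mathrm{Conf}^t(X)$ for every $t > 3$, and $\mathrm{Conf}^\infty(X) = \bigcap_{t>3}\mathrm{Conf}^t(X)$, the curve $\gamma$ is smooth into $\mathrm{Conf}^\infty(X)$. Define $\tilde{\gamma}(\epsilon) = \gamma(-\epsilon)$; this is smooth, hence differentiable, on $[0,\delta)$. Because $\gamma(0) = h$ by construction, we get $\tilde{f}(\epsilon)\tilde{\sigma}(\epsilon) = \tilde{\gamma}(\epsilon) \to h$ as $\epsilon \to 0$, which is exactly the remaining hypothesis of Proposition \ref{asym-family-prop}. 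Applying that proposition then yields an $\epsilon_0 > 0$ so that the Epstein maps $\mathrm{Ep}_{\tilde{\sigma}(\epsilon)}$ are embeddings for $\epsilon < \epsilon_0$, and the resulting surfaces $S_{-\epsilon} = \mathrm{Ep}_{\sigma_{-\epsilon}}(X)$ form an asymptotically Poincar\'e family. Reindexing back to $k = -\epsilon$ and shrinking $\delta$ to $\min(\delta,\epsilon_0)$ gives the statement.

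I do not expect a genuine obstacle here, as the argument is essentially a transcription of Corollary \ref{k-surfaces-parallel} with the function $f$ for mean curvature in place of the one for Gaussian curvature. The one point that warrants a line of care is confirming that the equation $G(k,\tau)=0$ actually encodes $H(\mathrm{Ep}_{\sigma_k}) = -\sqrt{1+k}$ \emph{everywhere} on $X$, i.e.\ that the denominator $(K(\sigma_k) - 1)^2 - \frac{16}{\sigma_k^2}|B(\sigma_k)|^2$ appearing in Lemma \ref{curvature-epstein} is nonvanishing for small $k$; this follows, as in the proof of Theorem \ref{k-surfaces-existence}, from Nehari's bound on $|B(h)|^2/h^2$ together with $|B(\sigma_k)|^2/\sigma_k^2 \to 0$ as $k \to 0$. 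With that observed, the corollary is immediate.

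\begin{proof}
We check that $\tilde{\sigma}(\epsilon) = \sigma_{-\epsilon}$ satisfies the hypotheses of Proposition \ref{asym-family-prop}, where $\sigma_k = f(k)^{-1}\gamma(k)$ with $f(k) = \frac{1-\sqrt{1+k}}{1+\sqrt{1+k}}$ and $\gamma$ the smooth solution curve to $G(k,\tau) = 0$ on $(-\delta,\delta)$ given by the previous theorem. Since $\mathrm{Conf}^\infty(X) = \cap_{t>3}\mathrm{Conf}^t(X)$ and $\gamma$ is smooth into $\mathrm{Conf}^t(X)$ for every $t > 3$, the curve $\gamma$ is smooth into $\mathrm{Conf}^\infty(X)$; in particular $\gamma(0) = h$. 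Define $\tilde{f}(\epsilon) = f(-\epsilon)$ and $\tilde{\gamma}(\epsilon) = \gamma(-\epsilon)$ for $\epsilon \in [0,\delta)$. Then $\tilde{f}$ is smooth on $[0,\delta)$ with $\tilde{f}(0) = 0$ and $\tilde{f}'(0) = -f'(0) = 1/4 \neq 0$, so $\tilde{f}$ has a simple zero at $0$. The curve $\tilde{\gamma}$ is differentiable, and $\tilde{f}(\epsilon)\tilde{\sigma}(\epsilon) = \tilde{\gamma}(\epsilon) = \gamma(-\epsilon) \to h$ as $\epsilon \to 0$. Proposition \ref{asym-family-prop} now provides $\epsilon_0 > 0$ so that for $\epsilon < \epsilon_0$ the Epstein surfaces $\mathrm{Ep}_{\tilde{\sigma}(\epsilon)}$ are embedded and form an asymptotically Poincar\'e family. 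Reindexing by $k = -\epsilon$ and replacing $\delta$ by $\min(\delta,\epsilon_0)$, the surfaces $S_k$ of constant mean curvature $-\sqrt{1+k}$, for $k \in (-\delta,0)$, form an asymptotically Poincar\'e family.
\end{proof}
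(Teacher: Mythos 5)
Your proposal is correct and follows essentially the same route as the paper: define $\sigma_k = f(k)^{-1}\gamma(k)$ from the solution curve of $G(k,\tau)=0$, check that $\tilde f(\epsilon)=f(-\epsilon)$ is smooth with a simple zero ($\tilde f'(0)=1/4$) and that $\tilde\gamma(\epsilon)=\gamma(-\epsilon)\to h$, and then invoke Proposition \ref{asym-family-prop}, exactly as in Corollary \ref{k-surfaces-cor}. Your extra remark about the nonvanishing of $(K(\sigma_k)-1)^2 - \tfrac{16}{\sigma_k^2}|B(\sigma_k)|^2$ via Nehari's bound is the same point the paper disposes of in the surrounding text.
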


\begin{proof}
Since $\gamma(k)$ solves $G(k,\gamma(k)) = 0$ for $k \in (-\delta,\delta)$, the metric $\gamma(k)$ solves the scaled mean curvature equation. 
So, defining $\sigma_k = \frac{1 + \sqrt{1+k}}{1 - \sqrt{1+k}} \ \gamma(k)$ for $k \in (-\delta,0)$, we see that $\sigma_k$ solves the mean curvature equation (\ref{mean-curvature-equation}), which implies that $\mathrm{Ep}_{\sigma_k}$ has constant mean curvature $-\sqrt{1+k}$.

The same arguments as in Corollary \ref{k-surfaces-cor} show that these metrics satisfy the hypothesis of Proposition \ref{asym-family-prop}.
So, the surfaces $S_k = \mathrm{Ep}_{\sigma_{k}}(X)$ form an asymptotically Poincar\'e family of surfaces. 
\end{proof}

Consequently, we obtain Theorem \ref{cmc-intro}.

%    Bibliographies can be prepared with BibTeX using amsplain,
%    amsalpha, or (for "historical" overviews) natbib style.
\providecommand{\bysame}{\leavevmode\hbox to3em{\hrulefill}\thinspace}
\providecommand{\MR}{\relax\ifhmode\unskip\space\fi MR }
% \MRhref is called by the amsart/book/proc definition of \MR.
\providecommand{\MRhref}[2]{%
  \href{http://www.ams.org/mathscinet-getitem?mr=#1}{#2}
}
\providecommand{\href}[2]{#2}

\end{document}